\newtheorem{theorem}{Theorem}
\newtheorem{lemma}[theorem]{Lemma}
\newtheorem{remark}[theorem]{Remark}
\newtheorem{corollary}[theorem]{Corollary}
\newtheorem{proposition}[theorem]{Proposition}
\newtheorem{example}[theorem]{Example}
\newtheorem{definition}[theorem]{Definition}
\newcommand{\set}[1]{\{#1\}}
\newcommand{\leqJ}{\leq_{\mathcal{J}}}
\newcommand{\leqL}{\leq_{\mathcal{L}}}
\newcommand{\leqR}{\leq_{\mathcal{R}}}
\newcommand{\z}{\mathbf{z}}
\newcommand{\A}{\pP_{(1)}}
\newcommand{\B}{\pP_{(>1)}}
\newcommand{\pP}{\mathcal{P}}
\begin{document}

\title[Sub-bimodules of the identity for cyclic quivers]{Sub-bimodules of the identity bimodule\\ for cyclic quivers}

\author{Love Forsberg}


\begin{abstract}
We describe the combinatorics of the multisemigroup with multiplicities 
for the tensor category of subbimodules of the identity bimodule, 
for an arbitrary non-uniform orientation of a finite cyclic quiver.
\end{abstract}
\maketitle

\section{Introduction}\label{s1}

Modern 2-representation theory has its origins in \cite{BFK,Kh,CR,Ro}, see \cite{Ma} for an overview. 
A recent direction of $2$-representations theory, which started in the series 
\cite{MM1,MM2,MM3,MM4,MM5,MM6} and was further developed in \cite{GM1,GM2,Xa,Zh1,Zh2,CM,MZ,MMZ,MaMa,KMMZ,MT,MMMT,Zi},
studies $2$-analogues of finite dimensional algebras which are called \emph{finitary} 2-categories,
as was introduced in \cite{MM1}. Most of the natural examples of finitary 2-categories which appear in the
listed papers have an additional structure given by weak involution and adjunction morphisms which is
governed by the notion of \emph{fiat} 2-categories, which also appears in \cite{MM1}. Fiat 2-categories have particularly nice properties, as can be seen from the main results of the papers mentioned above.

Therefore, to understand properties of general finitary but not fiat 2-categories, it is important to
have more examples of such 2-categories. The papers \cite{GM1,GM2,Zh2} study interesting examples of 
finitary but not fiat 2-categories defined using sub-bimodules of the identity bimodule for path algebras
of tree quivers. Combinatorics of finitary 2-categories is described by the corresponding \emph{multisemigroup},
as defined in \cite{MM2}. We refer the reader to \cite{KM} for more details on multisemigroups.
As was explained in \cite{Fo}, based on rough ideas outlined in \cite{MM2},
the multisemigroup of a finitary 2-category is a shadow of a slightly more general structure 
called \emph{multisemigroup with multiplicities}. The latter contains enough information to fully
recover the algebra structure of the Grothendieck decategorification of a finitary 2-category. 
For the 2-categories studied in \cite{GM1,GM2,Zh2}, the multisemigroup structure was shown to 
be rather simple (degenerate) and given by a usual semigroup. This means that, in the examples 
studied in \cite{GM1,GM2,Zh2}, composition of two indecomposable 1-morphisms was always an 
indecomposable 1-morphism or zero.

The present paper studies the first family of examples which are similar in spirit to the ones studied
in \cite{GM1,GM2,Zh2}, but which lie outside the world of path algebras of tree quivers. More precisely, we 
study the finitary 2-category given by sub-bimodules of the identity bimodule for path algebras
of arbitrary orientations of the (finite) cyclic quiver. We show that, typically, the multisemigroup 
(with multiplicities) for this 2-category is a proper multisemigroup and not a semigroup. We also explicitly 
describe the combinatorics of the multiplication in this multisemigroup and all appearing multiplicities.
To ensure that we stay in the world of finite dimensional algebras, we assume that the orientation of 
our cyclic quiver is not uniform, that is, does not result in an oriented cycle. Equivalently, our
quiver is assumed to have at least one sink (and hence at least one source). The case with exactly
one sink turns out to be special and more difficult than the general case.

The paper is organized as follows: Section~\ref{s2} sets up our notation and contains all necessary 
preliminaries. In Section~\ref{s3} we analyze, in detail, indecomposable ideals and their product, for
any non-uniform orientation of an affine Dynkin quiver of type $\tilde{A}_n$. Here an important combinatorial
tool is the graph of an ideal which is defined in terms of both, maximal paths which the ideal contains
(these are vertices) and primitive idempotents that the ideal contains (these are edges). For example,
in Theorem~\ref{thm9} we show that connectedness of this graph controls indecomposability of the ideal.
We also describe product and tensor product of ideals in terms of intersection for the corresponding graphs.
Finally, in Section~\ref{s4} we present a number of combinatorial results, from various enumeration
formulae to a combinatorial criterion for the product of two indecomposable ideals to decompose.

\section{Notation and preliminaries}\label{s2}

Let $Q$ be a {\em quiver}. We denote by $Q_0$ the set of all {\em vertices} in $Q$ and by $Q_1$ the set of all 
{\em arrows} in $Q$. The function $\mathbf{h}:Q_1\to Q_0$ assigns, to each arrow $\alpha\in Q_1$,
its {\em head} $\mathbf{h}(\alpha)$. The function $\mathbf{t}:Q_1\to Q_0$ assigns, to each arrow $\alpha\in Q_1$,
its {\em tail} $\mathbf{t}(\alpha)$. 

Denote by $\pP^*=\pP_Q^*$ be the set of all (oriented) {\em paths} over $Q$. We also apply the head and tail 
terminology and notation to oriented paths, in the obvious way. Denote by $\pP=\pP_Q$ the set 
$\pP^*\cup \set{\z}$, where $\z$ is just a formal element. Then the set $\pP$ has the natural structure of a
semigroup with respect to {\em  concatenation} of paths. Here, two paths $\omega_1$ and $\omega_2$ can be concatenated 
if and only $\mathbf{h}(\omega_2)=\mathbf{t}(\omega_1)$. The corresponding concatenation
is denoted $\omega_1\omega_2$ and we have 
$\mathbf{h}(\omega_1\omega_2)=\mathbf{h}(\omega_1)$ and 
$\mathbf{t}(\omega_1\omega_2)=\mathbf{t}(\omega_2)$. If two paths cannot be 
concatenated in this way, their concatenation is defined to be $\z$. We also define the
concatenation of $\z$ with any other element to be $\z$. The element 
$\z$ becomes the zero element of the semigroup $\pP$ under this definition.
For convenience of the reader we will try to adopt the following convention:
\begin{itemize}
\item vertices of $Q$ will be denoted $x,y,\cdots$,
\item arrows of $Q$ will be denoted $\alpha,\beta,\cdots$,
\item paths over $Q$ will be denoted  $\omega,\upsilon$,
\end{itemize}
if necessary, with additional decoration such as indices. Unless $|Q_0|=1$, the semigroup $\pP$ does not have
any identity element. As usual, we denote by $\pP^1$ the semigroup $\pP$, in case $\pP$ has an identity element,
and, otherwise, the semigroup obtained from $\pP$ by attaching an external identity element.

Throughout the paper we fix an algebraically closed field $\Bbbk$. The main object
of study in the paper is the {\em path algebra} $\Bbbk Q$ of the quiver $Q$. This algebra can be defined
as the quotient of the semigroup algebra $\Bbbk[\pP]$ modulo the one-dimensional ideal $\Bbbk\z$.

We denote by $\leq_{\mathcal{J}}$ Green's {\em two-sided relation} on $\pP$, see \cite{Gr}, 
defined, for $\omega,\upsilon\in \pP$, via $\omega\leqJ\upsilon$ if and only if 
$\pP^1\omega\pP^1\supset \pP^1\upsilon\pP^1$. Similarly, for $\omega,\upsilon\in \pP$, 
we define Green's {\em left relation}  $\leq_{\mathcal{L}}$ on $\pP$  via $\omega\leq_{\mathcal{L}}\upsilon$ 
if and only if  $\pP^1\omega\supset \pP^1\upsilon$; furthermore, we define Green's 
{\em right relation}  $\leq_{\mathcal{R}}$ on $\pP$  via $\omega\leq_{\mathcal{R}}\upsilon$ 
if and only if  $\omega\pP^1\supset \upsilon\pP^1$. We denote by $\mathcal{J}$, $\mathcal{L}$
and $\mathcal{R}$ the equivalence relations describing the equivalence classes with respect to 
$\leq_{\mathcal{J}}$, $\leq_{\mathcal{L}}$ and $\leq_{\mathcal{R}}$. For $\omega \in \pP$,
we denote by $\mathcal{J}_{\omega}$ the $\mathcal{J}$-equivalence class containing $\omega$ and
use similar notation for $\mathcal{L}$ and $\mathcal{R}$.

We denote by $\mathfrak{l}:\pP^*\to \mathbb{N}$ the usual {\em length function} which assigns to 
a path the number of arrows which constitute this path (counted with all multiplicities). 
This function $\mathfrak{l}$ satisfies $\mathfrak{l}(\omega\upsilon)=\mathfrak{l}(\omega)+\mathfrak{l}(\upsilon)$, 
for all $\omega,\upsilon\in\pP^*$ such that $\omega\upsilon\neq\z$. Clearly, $\omega\leqJ\upsilon$ 
implies $\mathfrak{l}(\omega)\leq\mathfrak{l}(\upsilon)$. In particular, it follows that $\omega$ is 
the unique shortest path in $\pP^1\omega\pP^1$, Consequently, all $\mathcal{J}$-classes in
$\pP$ are singletons. This, of course, implies that all Green's equivalence relations are trivial, that is,
coincide with the equality relation,  and that $\leq_{\mathcal{J}}$, $\leq_{\mathcal{L}}$ and 
$\leq_{\mathcal{R}}$ are genuine partial orders (in general, they are only pre-orders). 

For each vertex $x\in\ Q_0$, there is the {\em trivial path} $\varepsilon_x$ at vertex $x$ of length $0$.
For this path, we have $x=\mathbf{h}(\varepsilon_x)=\mathbf{t}(\varepsilon_x)$ and the path
$\varepsilon_x$ serves as the local identity with respect to concatenation.  

A path $\omega$ of positive length is called a {\em cycle} provided that 
$\mathbf{h}(\omega)=\mathbf{t}(\omega)$. A cycle of length $1$ is called a {\em loop}.
The set $\pP$ is finite if, and only if, $Q$ is finite and contains no cycles. 

A path $\omega\in \pP^*$ will be called \emph{maximal} provided that it is a maximal element in
$\pP^*$ with respect to the restriction of the partial order $\leqJ$  from $\pP$ to $\pP^*$. 
We note that the restriction to $\pP^*$ is necessary as $\pP$ contains a maximum element
with respect to $\leqJ$, namely, the element $\z$. As both, $\leqL$ and $\leqR$, can be
extended, as partial orders, to $\leqJ$, it follows that any maximal path is a maximal 
element in $\pP^*$ both, with respect to $\leqL$ and $\leqR$, as well. However, the converse
is usually not true.

For a path $\omega$, denote by $Q_\omega$ the (not necessarily full) subquiver of $Q$ consisting of 
all vertices $x$ satisfying $\varepsilon_x\leqJ \omega$ and all arrows $\alpha:x\to y$ satisfying 
$\alpha\leqJ\omega$. The quiver $Q_\omega$ will be called the \emph{support} of $\omega$.

Denote by $\mathcal{T}$ the equivalence relations on $\pP^*$ defined by 
$\omega\mathcal{T}\upsilon$ if, and only if, $\mathbf{t}(\omega)=\mathbf{t}(\upsilon)$.
Denote by $\mathcal{H}$ the equivalence relations on $\pP^*$ defined by 
$\omega\mathcal{H}\upsilon$ if, and only if, $\mathbf{h}(\omega)=\mathbf{h}(\upsilon)$.
Extend both $\mathcal{T}$ and $\mathcal{H}$ to $\pP$ by letting $\z$ be its own equivalence class. 
Abusing notation, we will denote the resulting relations still by $\mathcal{T}$ and $\mathcal{H}$.
Let $\mathcal{Q}$ denote the intersection of $\mathcal{T}$ and $\mathcal{H}$, which is again
an equivalence relation. Similarly as for Green's relations, for $\omega\in\pP$, we denote
by $\mathcal{Q}_{\omega}$ the equivalence class of $\mathcal{Q}$ containing $\omega$, and we use
similar notation for $\mathcal{T}$ and $\mathcal{H}$. Define the sets:
\[\A=\set{\omega\in\pP^*\mid |\mathcal{Q}_{\omega}|=1}, \text{ and }\B=\pP\setminus \A.\]
As we do not have any relations in $\pP$ which would identify two different paths,
directly from the definitions it follows that the set $\B$ forms a two-sided ideal in 
the semigroup $\pP$.

Let $\omega\in\pP^*$. Then the equivalence class $\mathcal{H}_{\omega}$
contains a unique minimal element with respect to the partial order $\leqJ$,
namely $\varepsilon_{\mathbf{h}(\omega)}$. Moreover, the equivalence classes $\mathcal{T}_{\omega}$
contains a unique minimal element with respect to the partial order $\leqJ$,
namely $\varepsilon_{\mathbf{t}(\omega)}$. The set $\mathcal{H}_{\omega}\cup\set{\z}$
is a left ideal of the semigroup $\pP$ and the set $\mathcal{T}_{\omega}\cup\set{\z}$
is a right ideal of $\pP$.

From the fact that $\B$ is an ideal in $\pP$, we have the following:
for $\omega,\upsilon\in\pP^*$, the facts that $\omega\leqJ\upsilon$ and $\upsilon\in\A$ imply that 
$\omega\in\A$ as well. Consequently, the fact that all \emph{maximal} paths are in $\A$ implies
that all paths are in $\A$. Note that each path $\varepsilon_x$, where $x\in Q_0$, is always in $\A$.

Being the zero element of $\pP$, the element $\z$ belong to any left, any right and any 
two-sided ideal  of $\pP$. We will work both with semigroup ideals in $\pP$ and algebra ideals
in $\Bbbk Q$. To avoid any confusions, we will always specify what kind of ideals we consider at each particular
moment.  Note that, any semigroup ideal ${I}\subset\pP$ cab be linearized to give 
an algebra ideal $I_{\Bbbk}:=\Bbbk {I}/\Bbbk\z$. Moreover, the map $I\mapsto I_{\Bbbk}$ is, clearly, 
injective. It is natural to ask under which conditions this map is bijective. We answer this
in Proposition~\ref{prop2} below. As a first step, we will need the following lemma.

\begin{lemma}\label{lem1}
Let $I\subset \Bbbk Q$ be a non-zero algebra ideal and $a\in I$ be of the form 
\[a=\sum_{\omega\in\pP^*}c_\omega \omega,\quad\text{ where }c_{\omega}\in\Bbbk.\]
Then, for every $\omega\in\pP^*$ satisfying $\omega\in\A$ and $c_\omega\neq 0$, we have $\omega\in I$. 
\end{lemma}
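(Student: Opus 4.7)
The plan is to isolate the summand $c_\omega\omega$ from $a$ by multiplying with suitable trivial paths on the two sides, exploiting precisely the defining property of $\A$, namely that $\mathcal{Q}_\omega$ is a singleton.

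More concretely, fix such an $\omega\in\A$ with $c_\omega\neq 0$ and set $x=\mathbf{h}(\omega)$, $y=\mathbf{t}(\omega)$. I would consider the element $\varepsilon_x\,a\,\varepsilon_y$, which lies in $I$ because $I$ is a two-sided algebra ideal and $\varepsilon_x,\varepsilon_y\in\Bbbk Q$. Unpacking the sum and using the concatenation rules stated after the convention $\mathbf{h}(\omega_1\omega_2)=\mathbf{h}(\omega_1)$ and $\mathbf{t}(\omega_1\omega_2)=\mathbf{t}(\omega_2)$, each summand $\varepsilon_x\upsilon\varepsilon_y$ equals $\upsilon$ when $\mathbf{h}(\upsilon)=x$ and $\mathbf{t}(\upsilon)=y$, and equals $\z$ (hence $0$ in $\Bbbk Q$) otherwise. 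Therefore
\[
\varepsilon_x\,a\,\varepsilon_y=\sum_{\upsilon\in\mathcal{Q}_\omega}c_\upsilon\upsilon.
\]

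By the hypothesis $\omega\in\A$, we have $\mathcal{Q}_\omega=\{\omega\}$, so the right-hand side collapses to $c_\omega\omega$. Since $c_\omega\in\Bbbk^\times$ and $I$ is an algebra ideal over the field $\Bbbk$, dividing by $c_\omega$ yields $\omega\in I$, as required.

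The argument is essentially a one-line calculation once the bookkeeping of heads and tails is set up; I do not see any genuine obstacle. The only point that merits care is making sure the convention in the paper ($\mathbf{h}(\omega_1\omega_2)=\mathbf{h}(\omega_1)$, $\mathbf{t}(\omega_1\omega_2)=\mathbf{t}(\omega_2)$) is applied consistently, so that left-multiplication by $\varepsilon_x$ truly selects paths whose head is $x$ and right-multiplication by $\varepsilon_y$ truly selects paths whose tail is $y$; once this is in place, the appeal to $|\mathcal{Q}_\omega|=1$ closes the proof.
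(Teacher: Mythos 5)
Your proposal is correct and follows exactly the paper's own argument: sandwiching $a$ between $\varepsilon_{\mathbf{h}(\omega)}$ and $\varepsilon_{\mathbf{t}(\omega)}$, using $|\mathcal{Q}_\omega|=1$ to collapse the result to $c_\omega\omega$, and dividing by $c_\omega$. No differences worth noting.
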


\begin{proof}
Fix some $\omega\in \A$ such that $c_\omega\neq 0$. Let $h=\mathbf{h}(\omega)$ and $t=\mathbf{t}(\omega)$. 
Then $\varepsilon_h a\varepsilon_t$ is a sum of elements of the form $c_{\upsilon}\upsilon$, where 
$\upsilon$ is a path such that $\mathbf{h}(\upsilon)=h$ and $\mathbf{t}(\upsilon)=t$. 
Because of our assumption that $\omega\in \A$, we thus have $\varepsilon_h a \varepsilon_t=c_\omega\omega$. 
As $a\in I$, it follows that $c_\omega^{-1}\varepsilon_h a \varepsilon_t=\omega\in I$ as well. The claim
follows.
\end{proof}

\begin{proposition}\label{prop2}
Let $Q$ be a quiver such that $\pP^*=\A$. Then every algebra ideal $I$ in $\Bbbk Q$ is  of the form
$\hat{I}_{\Bbbk}$, for some semigroup ideal $\hat{I}$ in $\pP$.
\end{proposition}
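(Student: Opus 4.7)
The natural strategy is to define $\hat{I}$ explicitly as the set of paths contained in $I$ (augmented with $\z$) and then verify the two required properties: that $\hat{I}$ is a semigroup ideal in $\pP$, and that its linearization recovers $I$. Concretely, the plan is to set
\[
\hat{I} := \{\omega \in \pP^* \mid \omega \in I\} \cup \{\z\}.
\]

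First I would verify that $\hat{I}$ is a two-sided semigroup ideal in $\pP$. Take $\omega \in \hat{I}$ and $\upsilon \in \pP^1$. If the semigroup product $\upsilon\omega$ equals $\z$ then it trivially lies in $\hat{I}$; otherwise $\upsilon\omega$ is the genuine path concatenation, which coincides with the product of $\upsilon$ and $\omega$ in the path algebra $\Bbbk Q$, so it lies in $I$ because $I$ is an algebra ideal and $\omega \in I$. Thus $\upsilon\omega \in \hat{I}$. The argument for right multiplication is symmetric, which gives the ideal property.

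Next I would show $\hat{I}_{\Bbbk} = I$. The inclusion $\hat{I}_{\Bbbk} \subseteq I$ is immediate, since every generator of $\hat{I}_{\Bbbk}$ lies in $I$ by construction. For the reverse inclusion, take any $a \in I$ and write
\[
a = \sum_{\omega \in \pP^*} c_\omega\, \omega, \qquad c_\omega \in \Bbbk,
\]
with finite support. By the hypothesis $\pP^* = \A$, every path $\omega$ appearing with $c_\omega \neq 0$ belongs to $\A$, so Lemma~\ref{lem1} applies and yields $\omega \in I$. Hence each such $\omega$ belongs to $\hat{I}$, and therefore $a \in \hat{I}_{\Bbbk}$.

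There is no real obstacle here: the content of the argument is entirely concentrated in Lemma~\ref{lem1}, and the role of the hypothesis $\pP^* = \A$ is just to guarantee that the lemma can be applied to \emph{every} summand of an arbitrary element of $I$ simultaneously. The only minor point to be careful about is the interplay between semigroup multiplication in $\pP$ (where failed concatenations produce $\z$) and algebra multiplication in $\Bbbk Q$ (where they produce $0$); this is handled cleanly by passing through the quotient $\Bbbk[\pP]/\Bbbk\z$.
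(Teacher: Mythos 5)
Your proof is correct and follows essentially the same route as the paper: both arguments define $\hat{I}$ as the set of paths belonging to $I$ (the paper phrases this as the set of paths occurring with nonzero coefficient in some element of $I$, which coincides with your set by Lemma~\ref{lem1}), use Lemma~\ref{lem1} together with the hypothesis $\pP^*=\A$ to get $I=\hat{I}_{\Bbbk}$, and deduce the semigroup ideal property from the compatibility of concatenation in $\pP$ with multiplication in $\Bbbk Q$. No issues.
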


\begin{proof}
For an algebra ideal $I$ in $\Bbbk Q$, define 
\[X:=\set{\omega\in\pP^*\mid \exists a\in I: a=\sum_{\upsilon\in\pP^*}c_{\upsilon}\upsilon\text{ and } c_\omega\neq 0}.\]
Directly from the definition we have $I\subset \Bbbk X$. At the same time, from Lemma~\ref{lem1} we have 
$X\subset I$ and hence $I=\Bbbk X$. Now let $\hat{I}=X\cup\set{\z}$. We claim that $\hat{I}$ is a semigroup 
ideal of $\pP$ and that $I=\hat{I}_{\Bbbk}$. The second part of the claim follows from the first part of the 
claim and all definitions. To prove the first part of the claim, we note that the standard basis of  $\Bbbk Q$ 
consists of elements in $\pP^*$. These elements multiply just as in $\pP$ with the only difference
that the semigroup element $\z$ is changed to the algebra element $0$. Therefore the fact that $I$ is an 
ideal implies that $\hat{I}$ is an ideal.
\end{proof}

Proposition~\ref{prop2} applies, in particular,  to all trees, regardless of orientation. If $Q$ is an orientation
of a cycle, then Proposition~\ref{prop2} applies if and only if $Q$ has at least two sinks (or, equivalently,
at least two sources).

Let $Q$ be a finite quiver without oriented cycles. Denote by $\mathrm{max}(Q)$ the quiver defined as follows:
\begin{itemize}
\item the set $\mathrm{max}(Q)_0$ of vertices in $\mathrm{max}(Q)$ is defined to be the set of all 
sources and all sinks in $Q$;
\item for $x,y\in \mathrm{max}(Q)_0$, the set of arrows in $\mathrm{max}(Q)$ from $x$ to $y$ coincides
with the set of all maximal paths in $Q$ from $x$ to $y$.
\end{itemize}
The underlying undirected graph of $\mathrm{max}(Q)$ has the natural structure of a bipartite graph in which 
we collect all sources in one part of the graph and  all sinks in the other part of the graph. 

\section{Oriented $\tilde{A}_n$ quivers}\label{s3}

\subsection{Sinks and sources}\label{s3.1}

From now on, we specialize to quivers whose underlying undirected graph is the affine Dynkin diagram 
$\tilde{A}_{n}$
\begin{displaymath}
\xymatrix{\bullet\ar@{-}[r]\ar@{-}@/^0.7pc/[rrrrr]
&\bullet\ar@{-}[r]&\bullet\ar@{-}[r]&\dots\ar@{-}[r]&\bullet\ar@{-}[r]&\bullet} 
\end{displaymath}
with $n$ vertices, for some $n\geq 1$. Moreover we require that the orientation of  $Q$ is chosen such
that $Q$ is acyclic, in particular, $n\neq 1$, and, furthermore,  
$Q$ has at least one source and at least one sink. We will call such orientation {\em admissible}.
Let $k$ be the 
number of sources in $Q$. Then the number of sinks in $Q$ is also $k$ because sources and sinks alternate
when one goes around our underlying unoriented cycle of the Dynkin diagram. By assumption, we have 
$k\geq 1$ and $|Q_0|=n$.

\begin{example}\label{exm3}
{\rm
Here is an example with $n=8$ vertices and $k=3$ sinks. Sources and sinks are highlighted. 
In this particular example we have $\pP^*=\A$.
\[\xymatrix{&\mathbf{1}&\mathbf{2}\ar[l]\ar[dr]\\
\mathbf{8}\ar[ur]\ar[d]&&&\mathbf{3}\\
\mathbf{7}&&&\mathbf{4}\ar[u]\ar[dl]\\
&6\ar[ul]&5\ar[l]}\]
}
\end{example}

\begin{example}\label{exm4}
{\rm
Here is another example with five vertices and one sink (again, sources and sinks are highlighted). 
In this examples there are two different paths from $\mathbf{1}$ to $\mathbf{3}$.
This is the only pair of paths which share both, a common head and a common tail.
\[\xymatrix{\mathbf{1}\ar[d]\ar[r]&2\ar[r]&\mathbf{3}\\
 4\ar[r]&5\ar[ur]}\]
}
\end{example}
 
The empiric observations made in Examples~\ref{exm3} and \ref{exm4} can be formulated as the
following lemma, whose proof is obvious and thus left to the reader.
 
\begin{lemma}\label{lem5}
Let $Q$ be an admissible orientation of $\tilde{A}_{n}$.
\begin{enumerate}[$($i$)$]
\item\label{lem5.1} If $k>1$, then $\pP^*=\A$.
\item\label{lem5.2} If $k=1$, then $\omega\in\pP^*$ belong to $\A$ if, and only if, it is not maximal. 
Moreover, there are exactly two maximal paths, and these two paths are both, $\mathcal{T}$- and 
$\mathcal{H}$-equivalent.
\end{enumerate}
\end{lemma}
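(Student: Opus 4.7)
The strategy is to extract a combinatorial description of $Q$ from admissibility and read off $\mathcal{Q}$-classes from it. Since sources and sinks alternate along the underlying cycle, $Q$ decomposes into $2k$ monotonely oriented arcs, each a linearly oriented path from a source to the adjacent sink, with pairwise disjoint interiors. The central observation is that every $\omega\in\pP^*$ lies entirely within a single such arc: a sink admits no outgoing arrow and a source no incoming arrow, so no oriented path can traverse either except at an endpoint. Likewise, $\omega$ is maximal in $\leqJ$ exactly when $\mathbf{t}(\omega)$ is a source and $\mathbf{h}(\omega)$ is a sink, i.e., when $\omega$ traverses its entire arc.

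For part~(i), with $k>1$, the pair $(\mathbf{t}(\omega),\mathbf{h}(\omega))$ determines the arc containing $\omega$: any interior vertex of an arc lies in that arc alone, and if both endpoints are extremal then, since there are at least four extremal vertices and every arc connects adjacent extrema, at most one arc has the prescribed pair as its endpoints; trivial paths $\varepsilon_x$ are handled directly since acyclicity of $Q$ forbids any non-trivial cycle at $x$. Within a linear quiver, paths are uniquely determined by their endpoints; hence $|\mathcal{Q}_\omega|=1$ for every $\omega\in\pP^*$, which gives $\pP^*=\A$.

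For part~(ii), with $k=1$, let $s$ and $t$ denote the unique source and sink. The cycle decomposes into exactly two arcs $\omega_1,\omega_2$, both oriented from $s$ to $t$, so they are both maximal and both $\mathcal{T}$- and $\mathcal{H}$-equivalent; in particular neither lies in $\A$. No other path can be maximal, because the tail and head of a maximal path must be a source and a sink respectively, leaving only $\omega_1$ and $\omega_2$. Conversely, if $\omega\in\pP^*$ is non-maximal then either $\mathbf{t}(\omega)\neq s$ or $\mathbf{h}(\omega)\neq t$, so at least one endpoint of $\omega$ is interior to a single arc; the arc is thereby determined and uniqueness of paths within a linear quiver then yields $\omega\in\A$, trivial paths again falling under acyclicity.

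The bookkeeping is entirely routine. The only point that deserves genuine attention is verifying that the "non-maximal" hypothesis in part~(ii) really forces an interior endpoint, which follows once one observes that the two arcs meet only at $s$ and $t$, so a path with both endpoints in $\{s,t\}$ must traverse an entire arc and therefore be maximal.
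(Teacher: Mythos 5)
Your proof is correct: the decomposition of $Q$ into $2k$ linearly oriented arcs between consecutive sources and sinks, the observation that every nontrivial path lives in a single arc and is determined there by its endpoints, and the characterization of maximal paths as full arcs together give exactly the statement. The paper declares this lemma's proof obvious and omits it, and your argument is precisely the intended one, with the relevant edge cases (trivial paths at extremal vertices, and the $k=1$ case where the two arcs share both endpoints) handled correctly.
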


\subsection{Ideals of $\Bbbk Q$}\label{s3.2}

After our observations in Lemma~\ref{lem5}, we can describe all algebra ideals in $\Bbbk Q$.

\begin{proposition}\label{prop6}
Let $Q$ be an admissible orientation of $\tilde{A}_{n}$ and $I$ an algebra ideal in $\Bbbk Q$. 
Then exactly one of the following statements holds:
\begin{enumerate}[$($a$)$]
\item\label{prop6.1} The ideal $I$ is a linearized semigroup ideal.
\item\label{prop6.2} We have $k=1$, and $I=\Bbbk(\omega+a\upsilon)$, where 
$\omega$ and $\upsilon$ are the two different maximal paths and $a\in\Bbbk\setminus\{0\}$.
\end{enumerate}
\end{proposition}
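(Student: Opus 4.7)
The plan is to dispatch the two cases by the number $k$ of sinks. If $k > 1$, Lemma~\ref{lem5}(\ref{lem5.1}) gives $\pP^* = \A$, so Proposition~\ref{prop2} immediately puts us in case~(\ref{prop6.1}). All the content lies in the case $k = 1$, where Lemma~\ref{lem5}(\ref{lem5.2}) tells us that $\pP^* \setminus \A = \set{\omega, \upsilon}$ for the two maximal paths $\omega, \upsilon$, which moreover share both head and tail.

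For $k = 1$, I would set $X := I \cap \pP^*$. Given $a = \sum_{\sigma \in \pP^*} c_\sigma \sigma \in I$, Lemma~\ref{lem1} forces $\sigma \in X$ whenever $\sigma \in \A$ and $c_\sigma \neq 0$, so the non-maximal part of $a$ already lies in $\Bbbk X \subseteq I$; subtracting it yields $c_\omega \omega + c_\upsilon \upsilon \in I$. I would then split on whether either of $\omega, \upsilon$ lies in $I$. If one of them does, a further subtraction shows that the maximal part of every element of $I$ lies in $\Bbbk X$, whence $I = \Bbbk X$. Otherwise, the subspace $V := \set{(c_\omega, c_\upsilon) \in \Bbbk^2 : c_\omega \omega + c_\upsilon \upsilon \in I}$ can contain neither $(1,0)$ nor $(0,1)$ (else $\omega$ or $\upsilon$ would be in $I$), so $V$ is either $\set{0}$ or a one-dimensional line through $(1, a)$ with $a \in \Bbbk \setminus \set{0}$.

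The crucial step is then to show that in this $\omega, \upsilon \notin I$ scenario the set $X$ is in fact empty. Suppose $\sigma \in X$; since $\omega, \upsilon \notin X$, the path $\sigma$ is non-maximal, so by finiteness of $\pP$ it admits some maximal path $\mu \in \set{\omega, \upsilon}$ with $\sigma \leqJ \mu$. Unpacking the definition of $\leqJ$ one has $\mu = \tau_1 \sigma \tau_2$ in $\pP$ for some $\tau_1, \tau_2 \in \pP^1$; the same product computed in $\Bbbk Q$ equals $\mu$, since the multiplication on the basis $\pP^*$ of $\Bbbk Q$ coincides with the semigroup operation on $\pP$ up to identifying $\z$ with $0$. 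Hence $\mu \in I$, a contradiction. So $X = \emptyset$, and either $V = \set{0}$ and $I = 0$ (trivially case~(\ref{prop6.1}) with $\hat{I} = \set{\z}$) or $I = \Bbbk(\omega + a \upsilon)$ as in case~(\ref{prop6.2}).

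Finally, in the ``$I = \Bbbk X$'' cases I would check that $\hat{I} := X \cup \set{\z}$ is a semigroup ideal with $\hat{I}_{\Bbbk} = I$: for $\sigma \in X$ and $\tau_1, \tau_2 \in \pP^1$, the semigroup product $\tau_1 \sigma \tau_2$ is either $\z$ or a path, and if a path it lies in $I \cap \pP^* = X$. Mutual exclusivity of the two cases is then immediate since a case-(\ref{prop6.2}) ideal is one-dimensional but its generator $\omega + a \upsilon$ is not a scalar multiple of any single path, and hence cannot be linearized. The main obstacle I expect is the $\leqJ$-extension argument forcing $X = \emptyset$ above; the rest is a clean case analysis driven by Lemma~\ref{lem1}.
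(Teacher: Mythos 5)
Your proof is correct and follows essentially the same route as the paper's: the case $k>1$ is dispatched via Lemma~\ref{lem5} and Proposition~\ref{prop2}, and for $k=1$ the argument is a case analysis driven by Lemma~\ref{lem1} together with the observation that the ideal property forces any non-maximal path lying in $I$ to drag a maximal path into $I$. Your write-up merely reorders the cases (splitting first on whether a maximal path lies in $I$ rather than on whether a non-maximal one does) and is, if anything, slightly more explicit than the paper about the zero ideal and the mutual exclusivity of (a) and (b).
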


\begin{proof}
If $k>1$, then, by Lemma~\ref{lem5}\eqref{lem5.1}, we have $\pP^*=\A$.
Therefore, in the case $k>1$, from  Proposition~\ref{prop2} it follows that 
$I$ is a linearized semigroup ideal, implying that we are in the situation as described in \eqref{prop6.1}.

It remains to consider the case $k=1$. Clearly, the algebra ideals of the form
$\Bbbk(\omega+a\upsilon)$, for $a\in\Bbbk\setminus\{0\}$, are not linearized semigroup ideals.
Therefore, we may assume that $I$ is not a linearized semigroup ideal and need to show that 
$I$ is of the form as in \eqref{prop6.2}.

If $I$ contains some element that, when written as a linear combination of paths, contains a
non-zero coefficient at some non-maximal path $\upsilon$, then $\upsilon\in I$ by Lemma~\ref{lem1}
as all non-maximal paths belong to $\A$. In this case the unique maximal path $\omega$ such that $\upsilon\leqJ\omega$
also belongs to $I$ as the latter is an ideal. Let $\varpi$ be the other maximal path in $\pP^*$, that
is the one different from $\omega$. If $\varpi\in I$, then from Lemma~\ref{lem1} we immediately have
that $I$ is a linearized semigroup ideal. If $\varpi\not\in I$, then any element of $I$, when 
written as a linear combination of paths, must have a zero coefficient at $\varpi$, for otherwise,
taking into account that $\omega\in I$ and following the proof of Lemma~\ref{lem1} would give us
$\varpi\in I$, a contradiction. In either case, $I$ is a linearized semigroup ideal.

It remains to consider the case when $I$ only contains linear combinations of $\omega$ and $\varpi$.
The argument in the previous paragraph implies that neither $\omega$ nor $\varpi$ can be in $I$
for then $I$ would be a linearized semigroup ideal. Therefore $I$ has to be one-dimensional
and of the form as  in \eqref{prop6.2}. The claim follows.
\end{proof}

It is worth to remember that non-linearized ideals only exist in the case when $k=1$.
In the latter case, when non-linearized ideals exist, the number of such ideals 
is finite only in the case the base field $\Bbbk$ is finite.

\subsection{Graph of an ideal}\label{s3.3}

Let $\Gamma$ be the graph, dual to the undirected graph underlying of $\mathrm{max}(Q)$. In other words,
$\Gamma$ can be described as follows:
\begin{itemize}
\item vertices of $\Gamma$ are maximal paths in $\pP^*$,
\item edges of $\Gamma$ are sinks and sources in $Q$,
\item a sink or source $x$ is the edge between the two maximal paths which have $x$ as the
unique common endpoint.
\end{itemize}
For a linearized semigroup ideal $I\subset \Bbbk Q$, let $\Gamma_I$ be the subgraph of $G$ defined
as follows:
\begin{itemize}
\item vertices of $\Gamma_I$ are exactly the maximal paths in $\pP^*$ which belong to $I$,
\item edges in $\Gamma_I$ are exactly the sinks and sources $x\in Q$ such that $\varepsilon_x$ is in $I$.
\end{itemize}
Note that well-definedness of $\Gamma_I$ is based on the fact that $I$ is an ideal: if
$x$ is a sink or a source in $Q$, then $\varepsilon_x\in $ implies that the two maximal paths 
$\omega,\upsilon$ which have $x$ as the common endpoint are also in $I$. To simplify notation,
we will denote by $V_I$ the set of vertices of $\Gamma_I$ and by $E_I$ the set of edges of $\Gamma_I$.

\begin{example}\label{ex7}
{\rm 
To illustrate the notions defined above, consider the following example:
Let $Q$ be the quiver 
\[\xymatrix{1\ar[d]_{\alpha_1}\ar[r]_{\alpha_2}&2\ar[r]_{\alpha_3}&3\\
4&5\ar[l]_{\alpha_6}&6\ar[l]_{\alpha_5}\ar[u]_{\alpha_4}},\]
and let $I=( \alpha_2,\varepsilon_6)$ be the ideal in $\Bbbk Q$ 
generated by the arrow  $\alpha_2$ and the trivial path $\varepsilon_6$ at $6$. As $I$ is generated
by elements of $\pP$, it is a linearized semigroup ideal. The ideal $I$ contains the 
maximal paths $\alpha_3\alpha_2,\alpha_4$ and $\alpha_6\alpha_5$. The corresponding
graph $\Gamma_I$ is therefore as follows:
\[\xymatrix{&\alpha_3\alpha_2\\
&&\alpha_4\ar@{-}[dl]_{\varepsilon_6}\\
&\alpha_6\alpha_5}\]
Note that the graph $\Gamma_I$ is disconnected and that $I$ equals the direct sum 
$(\alpha_2)\oplus(\varepsilon_6)$ of two ideals.
}
\end{example}

\begin{remark}\label{rem8}
{\rm
In general, there are many linearized semigroup ideals that share the same graph $\Gamma$. 
In more detail, let $I$ and $J$ be two linearized semigroup ideals in $\Bbbk Q$. 
Then $I\subseteq J$ implies $\Gamma_I\subseteq \Gamma_J$, by definition. However, 
$\Gamma_I=\Gamma_J$ and  $I\subset J$ do not imply $I=J$, in general, as, on the
semigroup level, the ideal $J$ might contain,
compared to $I$, just some extra paths with are neither maximal nor of length $0$. 
}
\end{remark}

The ideals $I$ which are not linearized semigroup ideals are on the form 
$I=\Bbbk(\omega+a\upsilon)$, by Proposition~\ref{prop6}\eqref{prop6.2}. 
In particular, they are one-dimensional and hence indecomposable. 
Our first major observation is the following proposition which says  
that indecomposability of a linearized semigroup ideal $I$ 
is controlled by the connectedness of the corresponding graph $\Gamma_I$.

\begin{theorem}\label{thm9}
Let $I$ be a non-zero linearized semigroup ideal. Then the ideal $I$ is indecomposable 
(as an ideal) if, and only if, the graph $\Gamma_I$ is connected.
\end{theorem}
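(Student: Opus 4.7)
The plan is to prove both directions via explicit constructions, treating the case $k>1$ first, where by Proposition~\ref{prop6} every ideal of $\Bbbk Q$ is a linearized semigroup ideal, and then handling the trickier case $k=1$, where non-linearized one-dimensional ideals $\Bbbk(\omega_1+a\omega_2)$ complicate matters.

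For the implication ``$\Gamma_I$ disconnected $\Rightarrow$ $I$ decomposes'', I would partition $\Gamma_I$ into two non-empty unions of connected components with vertex sets $V_1\sqcup V_2=V_I$. For each non-zero $\omega\in\hat{I}$ (where $I=\hat{I}_{\Bbbk}$), let $M(\omega)\subseteq V_I$ denote the set of maximal paths containing $\omega$ as a subpath; this is non-empty since any path extends upward to some maximal one, which lies in $V_I$ by the ideal property. When $k>1$, $|M(\omega)|=1$ except when $\omega=\varepsilon_x$ for $x$ a source or sink, in which case $|M(\omega)|=2$ but the two elements are joined by the edge $x\in E_I$ and hence belong to the same component. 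Thus $M(\omega)\subseteq V_j$ for a unique $j$, and $\hat{I}_j:=\set{\omega\in\hat{I}\setminus\set{\z}\mid M(\omega)\subseteq V_j}\cup\set{\z}$ are disjoint semigroup ideals whose linearizations give the decomposition $I=I_1\oplus I_2$ into non-zero pieces. Closure of $\hat{I}_j$ under multiplication by $\pP$ is immediate from the inclusion $M(\mu\omega\nu)\subseteq M(\omega)$.

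For the converse, suppose $I=I_1\oplus I_2$ with non-zero summands. The essential tool is the idempotent sandwich: writing $\omega=a_1+a_2$ with $a_j\in I_j$, we obtain $\omega=\varepsilon_h\omega\varepsilon_t=\varepsilon_ha_1\varepsilon_t+\varepsilon_ha_2\varepsilon_t$ with $\varepsilon_ha_j\varepsilon_t\in I_j$. When $\omega\in\A$, the condition $|\mathcal{Q}_{\omega}|=1$ forces each sandwich to equal $c_j\omega$ with $c_1+c_2=1$, and $I_1\cap I_2=0$ then places $\omega$ into a unique summand. For $\omega=\varepsilon_x$ with $x\in E_I$ a source or sink, acyclicity of $Q$ gives $\varepsilon_x(\Bbbk Q)\varepsilon_x=\Bbbk\varepsilon_x$ and the same argument applies, yielding $\varepsilon_x\in I_{j(\varepsilon_x)}$; the identity $M_i=\varepsilon_xM_i$ (source case) or $M_i=M_i\varepsilon_x$ (sink case) then drops both maximal paths adjacent to $x$ into the same summand, so no edge of $\Gamma_I$ crosses the partition $V_I=V_{I_1}\sqcup V_{I_2}$. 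Both $V_{I_j}$ are non-empty since any non-zero $a\in I_j$ has a path with non-zero coefficient which Lemma~\ref{lem1} (applicable as $\pP^*=\A$) places into $I_j$, and the ideal property then extends it to a maximal path in $V_{I_j}$.

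The main obstacle is the case $k=1$, where $\omega_1,\omega_2\notin\A$ and sub-ideals may be non-linearized of the form $\Bbbk(\omega_1+a\omega_2)$. I would split on $V_I$: if $V_I=\set{\omega_1}$, necessarily $E_I=\emptyset$ (otherwise $\omega_2\in I$), $\Gamma_I$ is a single vertex (connected), and the ideal closure $\omega_1=\alpha\upsilon\beta$ for each $\upsilon\in\hat{I}$ forces any decomposition to collapse into one summand. If $V_I=\set{\omega_1,\omega_2}$ and $E_I=\emptyset$, then $\omega_1,\omega_2$ share only $s$ and $t$, whose idempotents are absent from $I$, so grouping paths by which maximal path they lie on gives the decomposition. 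The delicate remaining case, $V_I=\set{\omega_1,\omega_2}$ with $\varepsilon_s\in E_I$ (or $\varepsilon_t$), is handled by ruling out decompositions: a non-linearized summand $\Bbbk(\omega_1+a\omega_2)$ is incompatible with the complement, which, by Lemma~\ref{lem1} applied to the expression $\varepsilon_s=\alpha+c(\omega_1+a\omega_2)$, must itself contain $\varepsilon_s$ and hence both $\omega_1,\omega_2$, forcing $\omega_1+a\omega_2\in I_1\cap I_2=0$; the case of two linearized summands succumbs to the $k>1$-style sandwich argument applied to $\varepsilon_s$.
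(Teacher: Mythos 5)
Your proof is correct and follows essentially the same strategy as the paper: splitting $\Gamma_I$ by connected components (grouping each path with the maximal paths containing it) to produce a decomposition when $\Gamma_I$ is disconnected, and showing that any direct sum decomposition forces each path --- hence each vertex and each idempotent edge of $\Gamma_I$ --- into a single summand, so that connectedness forces indecomposability. You supply more detail than the paper does at exactly the delicate points (the idempotent-sandwich argument justifying that every path of $I$ lies in precisely one summand, and the separate treatment of the $k=1$ case where the maximal paths lie outside $\A$ and non-linearized summands must be excluded via Proposition~\ref{prop6}), but the underlying route is the same.
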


\begin{proof}
We begin with the ``if'' direction. Assume that $\Gamma_I$ is connected and that
we have a decomposition $I=I_1\oplus I_2$, where $I_1$ and $I_2$ are non-zero ideals. 
Clearly, both $I_1$ and $I_2$ are linearized semigroup ideals. 
Thus $\Gamma_{I_1}$ and $\Gamma_{I_2}$ are well-defined. 
Let $\omega$ be a path in $I$. Then either $\omega\in I_1$ or $\omega\in I_2$. 
In particular, each maximal path in $I$ is either a maximal path in $I_1$
or a maximal path in $I_2$. This gives us a natural decomposition of vertices
in $\Gamma_I$ into a disjoint union of vertices in $\Gamma_{I_1}$ and those in $\Gamma_{I_2}$.
It remains to show that no edge of $\Gamma_I$ can connect a vertex of $\Gamma_{I_1}$
with a vertex of $\Gamma_{I_2}$. Indeed, let $x$ be an edge of $\Gamma_I$ connecting 
a vertex $\omega$ of $\Gamma_{I_1}$ with a vertex $\upsilon$ of $\Gamma_{I_2}$.
Then $\varepsilon_x\in I$ and hence either $\varepsilon_x\in I_1$ or $\varepsilon_x\in I_2$.
In the first case we get $\upsilon\in I_1$, a contradiction. In the second case we
get $\omega\in I_2$, a contradiction. This completes the proof of the ``if'' direction.

For the ``only if'' direction assume that $\Gamma_I$ is disconnected. 
Then $\Gamma_I$ has two or more connected components. Pick one maximal path $\omega_0$ in $I$. 
Let $I_1$ be the linear span of all paths $\upsilon$ in $I$ for which the unique maximal path 
$\omega$  satisfying $\upsilon\leqJ\omega$ is in the same connected component of
$\Gamma_i$ as $\omega_0$. Define $I_2$ to be the span of all paths in $I$ which are not in $I_1$. 
Then $I_1$ and $I_2$ are, by construction, non-zero algebra ideals such that $I=I_1\oplus I_2$.
Therefore $I$ is decomposable. This completes the proof.
\end{proof}  

\subsection{Tensor product of ideals}\label{s3.4}

For two algebra ideals $I$ and $J$ in $\Bbbk Q$, consider the multiplication map 
\begin{displaymath}
\varphi_{I,J}:I\otimes_{\Bbbk Q} J\to IJ,\quad a\otimes b\mapsto ab,\quad
\text{ for all } a\in I_1\text { and }b\in I_2.
\end{displaymath}

\begin{lemma}\label{lem10}
The map $\varphi_{I,J}$ is an isomorphism of $\Bbbk Q$-$\Bbbk Q$-bimodules.
\end{lemma}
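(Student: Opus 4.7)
The map $\varphi_{I,J}$ is evidently a homomorphism of $\Bbbk Q$-$\Bbbk Q$-bimodules, and it is surjective onto $IJ$ by the very definition of the product ideal; so the substantive content of the lemma is injectivity. My plan is to derive this from a flatness argument. Since $Q$ is a finite acyclic quiver, the path algebra $\Bbbk Q$ is hereditary, i.e., has global dimension at most one. In particular, every right ideal of $\Bbbk Q$ is projective, hence flat, as a right $\Bbbk Q$-module; this applies to $I$.

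I would then apply the functor $I\otimes_{\Bbbk Q}-$ to the short exact sequence $0\to J\hookrightarrow \Bbbk Q\to \Bbbk Q/J\to 0$ of left $\Bbbk Q$-modules. Flatness of $I$ yields an injection $I\otimes_{\Bbbk Q} J\hookrightarrow I\otimes_{\Bbbk Q}\Bbbk Q$. The canonical identification $I\otimes_{\Bbbk Q}\Bbbk Q\cong I$, $a\otimes r\mapsto ar$, turns this embedding into the map $a\otimes b\mapsto ab$, whose image is exactly $IJ\subseteq I$. This composite is precisely $\varphi_{I,J}$, which is therefore injective, and so an isomorphism.

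The only point where a moment's care is warranted is whether the argument covers the non-linearized one-dimensional ideals $\Bbbk(\omega+a\upsilon)$ that appear when $k=1$ (Proposition~\ref{prop6}\eqref{prop6.2}). These are two-sided ideals, so in particular right $\Bbbk Q$-submodules of $\Bbbk Q$, and the hereditary argument applies verbatim; concretely such an $I$ is isomorphic as a right $\Bbbk Q$-module to the indecomposable projective $\varepsilon_s\Bbbk Q$, where $s$ denotes the unique source, since the maximality of $\omega$ and $\upsilon$ forces every path of positive length in $\Bbbk Q$ to annihilate the generator, while $\varepsilon_s$ acts as the identity. I therefore do not foresee any real obstacle. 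An alternative, more combinatorial proof would observe that, for each path $\mu$ in $IJ$, the valid splittings $\mu=\omega\upsilon$ with $\omega\in I$ and $\upsilon\in J$ form an interval of positions along $\mu$ (this uses only that $I$ and $J$ are ideals) and are hence all identified under the tensor relation $\omega\alpha\otimes\upsilon=\omega\otimes\alpha\upsilon$, exhibiting $\varphi_{I,J}$ as a bijection of naturally matched bases.
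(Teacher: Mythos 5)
Your argument is correct and is essentially the paper's own proof: both establish injectivity by applying $I\otimes_{\Bbbk Q}{}_-$ to the short exact sequence $0\to J\to\Bbbk Q\to\Bbbk Q/J\to 0$, using that $\Bbbk Q$ is hereditary so that $I$ is projective (hence flat) as a right module, and then identifying the resulting embedding $I\otimes_{\Bbbk Q}J\hookrightarrow I\otimes_{\Bbbk Q}\Bbbk Q\cong I$ with $\varphi_{I,J}$. The extra remarks about the one-dimensional non-linearized ideals are fine but not needed, since any sub-bimodule of $\Bbbk Q$ is in particular a right submodule of a projective and hence projective over a hereditary algebra.
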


\begin{proof}
The map $\varphi_{I,J}$ is a homomorphism of $\Bbbk Q$-$\Bbbk Q$-bimodules by construction. 
The surjectivity of $\varphi_{I,J}$ is clear. Indeed, for a general element $x=\sum_s a_sb_s\in IJ$,
where all $a_s\in I$ and $b_s\in J$, the map $\varphi_{I,J}$ maps  the element 
$\sum_s a_s\otimes b_s\in I\otimes_{\Bbbk Q} J$ to $x$. 
It remains to prove the injectivity of $\varphi_{I,J}$.

Consider the short exact sequence
\begin{equation}\label{eq1}
0\to J\to \Bbbk Q\to \mathrm{Coker}\to 0 
\end{equation}
of $\Bbbk Q$-$\Bbbk Q$-bimodules, where $J\to \Bbbk Q$ is the natural inclusion and
$\Bbbk Q\to \mathrm{Coker}$ is the natural projection. As the algebra $\Bbbk Q$ is hereditary, 
the ideal $I$, being a sub-bimodule of the regular $\Bbbk Q$-$\Bbbk Q$-bimodule $\Bbbk Q$, is
projective as a right $\Bbbk Q$-module. Therefore the functor $I\otimes_{\Bbbk Q}{}_-$ is exact. 
Applying this exact functor to the short exact sequence in Equation~\eqref{eq1} results in the exact sequence
\begin{equation}\label{eq2}
0\to I\otimes_{\Bbbk Q}J\to I\otimes_{\Bbbk Q}\Bbbk Q\to I\otimes_{\Bbbk Q}\mathrm{Coker}\to 0.
\end{equation}
For the middle term in the sequence in Equation~\eqref{eq2}, the corresponding multiplication map 
$\varphi_{I,\Bbbk Q}:I\otimes_{\Bbbk Q}\Bbbk Q\to I$
is, clearly, an isomorphism. Composing this isomorphism with the inclusion 
$I\otimes_{\Bbbk Q}J\hookrightarrow I\otimes_{\Bbbk Q}\Bbbk Q$ from the sequence in Equation~\eqref{eq2}, results in the
map $\varphi_{I,J}$. This shows that $\varphi_{I,J}$ is, indeed, injective. The proof is complete.
\end{proof}

Because of Lemma~\ref{lem10}, we may identify $I\otimes_A J$ with the ideal $IJ\subset A$ via $\varphi_{I,J}$.

\subsection{Graph of ideals vs tensor products of ideals}\label{s3.5}

The following proposition describes the behavior of the graph of an ideal under tensor product.

\begin{proposition}\label{prop11}
For linearized semigroup ideals $I$ and $J$ in $\Bbbk Q$, we have:
\begin{enumerate}[$($i$)$]
\item\label{prop11.1} $\Gamma_{I\otimes_A J}\subset \Gamma_I\cap \Gamma_J$.
\item\label{prop11.2} $E_{I\otimes_A J}=E_I\cap E_J$.
\item\label{prop11.3} If the graph $\Gamma_I\cap \Gamma_J$ has no isolated vertices, then $\Gamma_{I\otimes_A J}= \Gamma_I\cap \Gamma_J$.
\item\label{prop11.4} There is an ideal $J'\subset J$ such that $I\otimes_A J=I\otimes_A J'$, and 
\[\Gamma_{I\otimes_A J}=\Gamma_{I\otimes_A J'}=\Gamma_I\cap \Gamma_{J'}. \] 
If, moreover, $J$ is indecomposable, then so is $J'$.
\end{enumerate}
\end{proposition}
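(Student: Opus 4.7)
The plan is to establish the four parts in order, working throughout with the identification $I\otimes_A J \cong IJ$ given by Lemma~\ref{lem10}. Part (i) is immediate: since $IJ \subseteq I\cap J$, every maximal path and every trivial path at a sink or source lying in $IJ$ lies in both $I$ and $J$, yielding $\Gamma_{IJ} \subseteq \Gamma_I \cap \Gamma_J$. For (ii), the $\subseteq$-direction follows from (i), and the reverse uses the identity $\varepsilon_x = \varepsilon_x\cdot\varepsilon_x$: whenever $\varepsilon_x$ lies in both $I$ and $J$, this immediately gives $\varepsilon_x \in IJ$.

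For (iii), the missing ingredient is the vertex inclusion $V_I \cap V_J \subseteq V_{IJ}$ under the no-isolated-vertex hypothesis. Taking $\omega \in V_I \cap V_J$, the non-isolated assumption provides an endpoint $x$ of $\omega$ with $x \in E_I \cap E_J$, so $\varepsilon_x \in I \cap J$. If $x$ is the sink $\mathbf{h}(\omega)$, then $\omega = \varepsilon_x\cdot\omega$ with $\varepsilon_x \in I$ and $\omega \in J$ puts $\omega \in IJ$; dually if $x$ is the source $\mathbf{t}(\omega)$, the identity $\omega = \omega\cdot\varepsilon_x$ works. Combined with (i) and (ii), equality of the graphs follows.

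For (iv) I would construct $J'$ by pruning from $J$ the ``problematic'' maximal paths $W := (V_I \cap V_J) \setminus V_{IJ}$. Concretely, take $J'$ to be the sub-ideal of $J$ generated by (a) those generators $g$ of $J$ whose set of containing maximal paths is disjoint from $W$, together with (b) the maximal paths lying in $V_{IJ}$ themselves, added explicitly to keep them in $V_{J'}$ in cases where the only $J$-generator passing through them is a shared trivial path at an endpoint whose other maximal path belongs to $W$. The endpoint-split argument from (iii) shows that at any $x \in E_I \cap E_J$ both maximal paths at $x$ lie in $V_{IJ}$, so $\varepsilon_x$ is retained in $J'$; this yields $E_I \cap E_{J'} = E_I \cap E_J = E_{IJ}$, and by construction $V_I \cap V_{J'} = V_{IJ}$, hence $\Gamma_I \cap \Gamma_{J'} = \Gamma_{IJ}$.

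The main content is the product equality $IJ = IJ'$ together with the indecomposability claim. The inclusion $IJ' \subseteq IJ$ is immediate from $J' \subseteq J$; for the reverse, the key observation is that in an admissible $\tilde{A}_n$ orientation each internal vertex of a maximal path admits a unique outgoing direction, so any nonzero product $ab$ with $b$ supported on a bad maximal path $\omega \in W$ would force $ab$ to be a subpath of $\omega$ and hence, via ideal closure, place $\omega$ in $V_{IJ}$ -- contradicting $\omega \in W$. Thus bad contributions vanish, and removing them does not change the product. For indecomposability, I would observe that every $\omega \in W$ satisfies $\mathbf{h}(\omega)\notin E_I$ and $\mathbf{t}(\omega)\notin E_J$ (else an endpoint split would place $\omega$ in $V_{IJ}$), so $\omega$ has at most one incident edge in $\Gamma_J$ and is a leaf of $\Gamma_J$ whenever $|V_J|\geq 2$; iteratively pruning leaves from a connected $\Gamma_J$ preserves connectedness, giving indecomposability of $J'$. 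The hardest step is the bookkeeping around the generator-replacement in the construction: one must analyze the cascade of removals of shared trivial generators carefully to ensure both that $IJ' = IJ$ and that the resulting $\Gamma_{J'}$ stays connected even when neighbors of bad leaves lose their only shared $\varepsilon_x$.
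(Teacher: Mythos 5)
Parts (i)--(iii) are correct and take essentially the paper's route; your derivation of (i) from the inclusion $IJ\subseteq I\cap J$ is if anything slightly cleaner than the paper's splitting argument, and your endpoint--splitting proof of (iii) is the same mechanism the paper uses (every vertex adjacent to an edge of $E_I\cap E_J=E_{IJ}$ must lie in $V_{IJ}$).

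The problem is in (iv), and it is exactly the step you flag as ``the hardest''. Your $J'$ is built from a chosen generating set of $J$, which makes the construction non-canonical and creates the bookkeeping you defer: a path $\upsilon\in J$ that does not embed into any bad maximal path can still be discarded by your construction when its only witnesses among the generators are shared trivial paths $\varepsilon_x$ at an endpoint of some $\omega\in W$. For such $\upsilon$, neither your ``bad contributions vanish'' argument (which only kills products $ab$ with $b$ embedding into a path of $W$) nor clause (b) (which only rescues maximal paths of $V_{IJ}$) shows that discarding $\upsilon$ leaves the product unchanged, nor that $\Gamma_{J'}$ keeps the vertex carrying $\upsilon$; so as written, $IJ=IJ'$ and the connectedness of $\Gamma_{J'}$ are not established. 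Closing this requires the further observation (which you prove later, but only use for the leaf argument) that $\omega\in W$ forces $\varepsilon_{\mathbf{t}(\omega)}\notin J$ and $\varepsilon_{\mathbf{h}(\omega)}\notin I$: hence the shared vertex $x$ must be the sink $\mathbf{h}(\omega)$, every such $\upsilon$ terminates at $x$, and $I\upsilon=0$ because the only left factor composable with $\upsilon$ would be $\varepsilon_x\in I$, which would put $\omega$ into $V_{IJ}$. The paper sidesteps all of this by defining $J'$ intrinsically as the span of \emph{all} paths of $J$ that do not embed into any element of $W$; this is an ideal (the intersection of $J$ with the ideal of all paths not embedding into $W$), the set $J\setminus J'$ is then precisely the set of paths your vanishing argument annihilates, $V_{J'}=V_J\setminus W$ and $E_{J'}$ is $E_J$ minus the edges at $W$ by inspection, and your (correct) observation that the elements of $W$ are leaves of the chain $\Gamma_J$ finishes indecomposability. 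A final small point: the assertion that any nonzero $ab$ with $b$ supported on $\omega\in W$ is a subpath of $\omega$ fails literally when $b=\varepsilon_x$ at a shared endpoint of $\omega$; the same endpoint dichotomy ($x$ a source gives $\omega=\omega\varepsilon_x\in IJ$, $x$ a sink forces $a=\varepsilon_x\in I$ and hence $\omega\in V_{IJ}$) is needed to dispose of that case as well.
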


\begin{proof}
We begin by proving claim~\eqref{prop11.1}. Let $\omega$ be a maximal path in $IJ$. 
Then $\omega=\omega_l\omega_r$, for some $\omega_l$ in $I$ and $\omega_r$ in $J$. 
But, as both, $\omega_l\leqJ\omega$ and $\omega_r\leqJ\omega$, it follows that $\omega$ 
belongs to  both, $I$ and $J$. This proves that $V_{I\otimes_A J}\subset V_I\cap V_J$. 
To prove $E_{I\otimes_A J}\subset E_I\cap E_J$, note that the equation 
$\omega\cdot\upsilon=\varepsilon_x$, where $\omega\in\pP^*$ and $\upsilon\in\pP^*$, has the unique 
solution $\omega=\upsilon=\varepsilon_x$. Thus $\varepsilon_x\in IJ$ implies 
that $\varepsilon_x$ is also in both, $I$ and $J$. Claim~\eqref{prop11.1} is proved.

To prove claim~\eqref{prop11.2}, it remains to show that $E_{I\otimes_A J}\supset E_I\cap I_J$. 
Let $\varepsilon_x$ in $E_I\cap E_J$. Then $\varepsilon_x=\varepsilon_x\cdot \varepsilon_x$ is also in 
$E_{I\otimes_A J}$ and claim~\eqref{prop11.2} follows.

To prove  claim~\eqref{prop11.3}, assume that $\Gamma_I\cap \Gamma_J$ has no isolated vertices. 
From claim~\eqref{prop11.2} we have that $E_{I\otimes_A J}=E_I\cap E_J$, and
from claim~\eqref{prop11.1} we have that $\Gamma_{I\otimes_A J}\subset \Gamma_I\cap \Gamma_J$.
These imply that all vertices adjacent to some edge 
in $E_I\cap E_J$ are in $\Gamma_{I\otimes_A J}$. By assumption, all vertices in $V_I\cap V_J$ are
adjacent to some edge in $E_I\cap E_J$. Hence $V_{I\otimes_A J}\supset V_I\cap V_J$. Put together, this 
gives $\Gamma_{I\otimes_A J}= \Gamma_I\cap \Gamma_J$, completing the proof of claim~\eqref{prop11.3}.

It remains to prove claim~\eqref{prop11.4}. Because of claim~\eqref{prop11.3},
we only need to prove claim~\eqref{prop11.4} in the situation when $\Gamma_I\cap G_J$ has isolated vertices. 
Denote by  $\Delta$ the difference graph
$(\Gamma_I\cap \Gamma_J)\setminus \Gamma_{I\otimes_A J}$. Because of the arguments in the previous
paragraph, $\Delta$ consists only of isolated vertices. We denote the set of vertices of
$\Delta$ by $\Delta_0$. If $\Delta$ is empty, claim~\eqref{prop11.4} is 
obvious. Therefore we may assume that $\Delta$ is not empty. 

Let $J'$ be the span of all paths in $J$ 
that can not be extended to any $\omega\in \Delta_0$:
\[J':=\Bbbk\set{\upsilon\in J\cap\pP^*\mid \upsilon\not\leqJ \omega,\text{ for any }\omega\in\Delta_0}.\]
Clearly $J'\subset J$, which implies that $IJ\supset IJ'$. 
Moreover, $J'$ is an ideal, as it is the intersection of the ideals $J$ with the ideal
\begin{displaymath}
\Bbbk\set{\upsilon\in\pP^*\mid \upsilon\not\leqJ \omega, \text{ for any }\omega\in\Delta_0}. 
\end{displaymath}
It remains to show that $IJ\subset IJ'$. 
Let $\omega\in\Delta_0$ and $\upsilon\in J\setminus J'$ be a path such that $\upsilon\leqJ \omega$.
Assume that $\varpi\in I$ is  a path such that $\varpi\upsilon$ is non-zero. Then 
$IJ\ni \varpi\upsilon\leqJ \omega$ (as $\upsilon\leqJ \omega$ and $\omega$ is maximal), 
implying $\omega\in \Gamma_{I\otimes_A J}$. This contradicts our definition of $\Delta$.
Consequently, $I\upsilon=0$, and hence $IJ\subset IJ'$. Put together, we get $IJ=IJ'$.

Assume that $J$ is indecomposable. Then $\Gamma_J$ is connected by Theorem~\ref{thm9}.
Directly from the definitions we have that $\Gamma_J$ is either an unoriented cycle or
a chain. In the first case, $J$ contains $\varepsilon_x$, for any sink or source of $Q$,
together will all maximal paths in $\pP$. Therefore, for any maximal path $\omega$ in $I$, 
we have $\omega=\omega\varepsilon_{\mathbf{t}(\omega)}\in IJ$, as $\mathbf{t}(\omega)$ is a source. 
Hence in this case $\Delta$ is empty. 

It remains to consider the case when $\Gamma_J$ is a chain, say
\begin{displaymath}
\xymatrix{ 
\omega_1\ar@{-}[r]^{x_1}&\omega_2\ar@{-}[r]^{x_2}&\omega_3\ar@{-}[r]^{x_3}&\dots
\ar@{-}[r]^{x_{k-2}}&\omega_{k-1}\ar@{-}[r]^{x_{k-1}}&\omega_k.
} 
\end{displaymath}
We claim that $\Delta_0\subset\{\omega_1,\omega_k\}$, which certainly implies indecomposability of $J'$,
because of the definitions and Theorem~\ref{thm9}. To prove that
$\Delta_0\subset\{\omega_1,\omega_k\}$, assume that $\Delta_0$ contains
$\omega_s$, for some $s\neq 1,k$. Then both, $\varepsilon_{\mathbf{h}(\omega_s)}$
and $\varepsilon_{\mathbf{t}(\omega_s)}$ are in $J$, because of the form of $\Gamma_J$. 
Hence $\omega_s$ belongs both to $I$ and $IJ$,
which contradicts $\omega_s\in \Delta_0$. This completes the proof of claim~\eqref{prop11.4}
and of the whole proposition.
\end{proof}

\subsection{Decomposition of tensor product}\label{s3.6}

\begin{corollary}\label{cor12}
Let $I_1,\cdots, I_m$ be indecomposable linearized semigroup ideals in $\Bbbk Q$.
Then, for ${}_{\Bbbk Q}I_{\Bbbk Q}:={}_{\Bbbk Q}(I_1I_2\cdots I_m)_{\Bbbk Q}\cong 
{}_{\Bbbk Q}(I_1\otimes_{\Bbbk Q} I_2\otimes_{\Bbbk Q}\cdots\otimes_{\Bbbk Q} I_m)_{\Bbbk Q}$, 
the following statements hold.
\begin{enumerate}[$($i$)$]
\item\label{cor12.1} The ideal $I$ is a direct sum of at most $\min(m,2k)$ indecomposable ideals.
\item\label{cor12.2} Each indecomposable summand in $I$ has multiplicity one.
\item\label{cor12.3} Each indecomposable summand in $I$ is a linearized semigroup ideal.
\end{enumerate}
\end{corollary}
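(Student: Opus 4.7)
The overall plan is to translate the statement into combinatorics of the graph $\Gamma$, which is an unoriented cycle on $2k$ vertices: sinks and sources of $Q$ alternate around the underlying cycle of $\tilde{A}_n$, and each such vertex is shared by exactly two adjacent maximal paths. First I would iteratively apply Proposition~\ref{prop11}\eqref{prop11.4}: set $J_1:=I_1$, and for $j=2,\ldots,m$ apply the proposition to the pair $(J_{j-1},I_j)$ to produce an indecomposable sub-ideal $I_j'\subseteq I_j$ satisfying
\[
J_{j-1}I_j=J_{j-1}I_j'\quad\text{and}\quad \Gamma_{J_{j-1}I_j'}=\Gamma_{J_{j-1}}\cap\Gamma_{I_j'}.
\]
Setting $J_j:=J_{j-1}I_j'$ (still a linearized semigroup ideal, since products of path-spanned ideals are path-spanned), an induction yields $J_m=I$ and
\[
\Gamma_I=\Gamma_{I_1}\cap\Gamma_{I_2'}\cap\cdots\cap\Gamma_{I_m'},
\]
with each factor on the right an arc of $\Gamma$ or the full cycle, by Theorem~\ref{thm9}.

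Next I would decompose $I$ along connected components of $\Gamma_I$, imitating the ``only if'' direction of the proof of Theorem~\ref{thm9}. For each component $c$, let $I^{(c)}$ be the span of those paths $\upsilon\in I\cap\pP^*$ whose maximal extensions all lie in $c$; this is well defined because a path of positive length has a unique maximal extension, and a trivial path $\varepsilon_x\in I$ with two extensions occurs only for $x$ a sink or source, in which case the edge $x$ of $\Gamma_I$ places both extensions in a single component. Any nonzero product $\alpha\upsilon$ or $\upsilon\alpha$ has maximal extensions that still extend $\upsilon$, so each $I^{(c)}$ is a sub-ideal; being spanned by paths it is a linearized semigroup ideal, which is \eqref{cor12.3}. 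Since $\Gamma_{I^{(c)}}=c$ is connected, $I^{(c)}$ is indecomposable by Theorem~\ref{thm9}, and the direct-sum decomposition $I=\bigoplus_c I^{(c)}$ is canonical with distinct components indexing distinct sub-ideals of $I$; this is \eqref{cor12.2}.

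Finally, for \eqref{cor12.1} it suffices to bound the number of components of $\Gamma_I$ by $\min(m,2k)$. The bound $2k$ is immediate because $\Gamma$ has $2k$ vertices. For the bound $m$, each vertex set $V(\Gamma_{I_j'})$ is an arc of $\Gamma$ (or the whole cycle), so its complement is an arc (possibly empty); the complement of $V(\Gamma_I)=\bigcap_j V(\Gamma_{I_j'})$ is therefore a union of $m$ arcs in $\Gamma$. Such a union has at most $m$ connected components on a cycle, so its complement $V(\Gamma_I)$ also decomposes into at most $m$ arcs, which correspond exactly to the components of $\Gamma_I$. The main obstacle is the iterative handling of Proposition~\ref{prop11}\eqref{prop11.4} in Step~1, verifying that the clean intersection formula survives all $m$ stages; a subsidiary subtlety in \eqref{cor12.2} arises in the exceptional case $k=1$, where two summands of the form $\Bbbk\omega$ from distinct components can be abstractly isomorphic as $\Bbbk Q$-bimodules, so the multiplicity-one assertion is best read as uniqueness of the canonical sub-ideal decomposition indexed by components of $\Gamma_I$.
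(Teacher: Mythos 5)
Your reduction via Proposition~\ref{prop11}\eqref{prop11.4} and your handling of parts \eqref{cor12.2} and \eqref{cor12.3} are sound and close in spirit to the paper, which extracts the same canonical decomposition from the connected components of $\Gamma_I$ via Theorem~\ref{thm9}; your caveat about reading ``multiplicity one'' as uniqueness of the component-indexed decomposition in the $k=1$ case matches the paper's intent. The genuine gap is in your count for part \eqref{cor12.1}. You bound the number of components of $\Gamma_I$ by the number of maximal contiguous blocks of the vertex set $V_I=\bigcap_j V_{I_j'}$ and assert these blocks ``correspond exactly to the components of $\Gamma_I$''. They do not: components of $\Gamma_I$ are also cut by the edge set $E_I=\bigcap_j E_{I_j'}$, and an indecomposable factor can have $\Gamma_{I_j'}$ equal to a Hamiltonian chain on the $2k$-cycle (all vertices, one edge missing --- e.g.\ the ideal generated by all $\varepsilon_x$ with $x$ a sink or source except one). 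Such a factor constrains $V_I$ not at all, yet deletes an edge and can disconnect a block. Concretely, for $k=2$ take $\Gamma_{I_1}$ and $\Gamma_{I_2}$ to be the $4$-cycle minus two opposite edges: then $V_{I_1}=V_{I_2}$ is the full vertex set, so your argument sees a single block, while $\Gamma_{I_1I_2}$ (which here equals $\Gamma_{I_1}\cap\Gamma_{I_2}$ by Proposition~\ref{prop11}\eqref{prop11.3}) has two components. The bound $\min(m,2k)$ still happens to hold in this example, but your reasoning does not establish it.

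Two repairs are available. The paper's route is an induction on $m$: when passing from $J=I_1\cdots I_{m-1}$ to $JI_m$, at most one connected component $C$ of $\Gamma_J$ can satisfy that $C\cup\Gamma_{I_m}$ is the full cycle, only that component can split, and only into two pieces; hence each new factor increases the component count by at most one. Alternatively, you can make your arc argument honest by treating each connected $\Gamma_{I_j'}$ as a closed arc of the topological circle underlying $\Gamma$, \emph{including its edges}, so that its complement is a single open arc (possibly empty, possibly containing no vertex at all); the complement of $\bigcap_j\Gamma_{I_j'}$ is then a union of at most $m$ open arcs and so has at most $m$ components, whence the intersection --- and therefore $\Gamma_I$, which differs from it only by discarded isolated vertices --- has at most $m$ components. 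Either way, the vertex-only bookkeeping must be replaced by bookkeeping that tracks edges.
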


\begin{proof}
We note that the ${\Bbbk Q}$-${\Bbbk Q}$-bimodule isomorphism 
$${}_{\Bbbk Q}(I_1I_2\cdots I_m)_{\Bbbk Q}\cong 
{}_{\Bbbk Q}(I_1\otimes_{\Bbbk Q} I_2\otimes_{\Bbbk Q}\cdots\otimes_{\Bbbk Q} I_m)_{\Bbbk Q}$$
follows from Lemma~\ref{lem10} and is given by the multiplication map.
The ideal $I$ is the span of 
all elements on the form $\omega_1\omega_2\cdots\omega_m$, where all  $\omega_i$ are paths. 
Therefore $I$ is a linearized semigroup ideal  and its graph $\Gamma_I$ is well-defined. 

Each indecomposable summand of $I$ corresponds to a non-empty connected component in $\Gamma_I$,
by Theorem~\ref{thm9}. In particular, it is a linearized semigroup ideal and appears in 
$\Gamma_I$ with multiplicity one. The number of such components does not exceed the number of vertices
of $\Gamma_I$ and the latter is equal to $2k$. Let us now use induction on $m$ to prove that the number 
of connected components of  $\Gamma_I$ does not exceed $m$. The case $m=1$ is obvious.

To make the induction step, assume that $J:=I_1I_2\cdots I_{m-1}$ is a sum of at most $m-1$ 
indecomposable summands. Then $\Gamma_{J}$ has at most $m-1$ connected components and they all are,
obviously, disjoint. 

Let $K$ and $K'$ be two indecomposable linearized semigroup ideals such that $KK'\neq 0$.
If either $\Gamma_K$ or $\Gamma_{K'}$ is a cycle, then, from Proposition~\ref{prop11}, we
get the equality $\Gamma_{KK'}=\Gamma_K\cap \Gamma_{K'}$ and hence $KK'$ is indecomposable. 
If both, $\Gamma_K$ and $\Gamma_{K'}$, are chains, there are two possibilities:
\begin{itemize}
\item The intersection $\Gamma_K\cap \Gamma_{K'}$ is again a chain. In this case 
from Proposition~\ref{prop11} it follows that $KK'$ is indecomposable.
\item The intersection $\Gamma_K\cap \Gamma_{K'}$ is a union of two chains. This is
only possible in case $\Gamma_K\cup \Gamma_{K'}$ is the full cycle
(i.e. coincides with  $\Gamma_{\Bbbk Q}$). In this case 
$KK'$ might decompose into a direct sum of two indecomposable summands (in which case each 
summand corresponds to a connected component of $\Gamma_K\cup \Gamma_{K'}$).
\end{itemize}

Now, let us return to $\Gamma_{J}$, which is a disjoint union of connected components. 
From the previous paragraph it follows that, for the connected graph $\Gamma_{I_m}$,
there could exist at most one connected component of $\Gamma_{J}$ such that the
union of this component with $\Gamma_{I_m}$ is a cycle. Only this component can split into
two different components when going from $J$ to $JI_m$. Therefore the number of 
indecomposable direct summands of $JI_m$ is at most $m$. This completes the proof.
\end{proof}

\begin{example}\label{exm13}
{\em
Here is an example which illustrates how the product of two ideals splits into a direct sum.
Let $Q$ be given by:
\begin{displaymath}
\xymatrix{ 
1\ar[r]^{\alpha}\ar[d]_{\nu}&2&3\ar[l]_{\beta}\ar[d]^{\gamma}\\
6&5\ar[l]_{\kappa}\ar[r]^{\delta}&4
}
\end{displaymath}
Let $I$ be the ideal of $\Bbbk Q$ generated by $\varepsilon_1,\varepsilon_2,\varepsilon_5,\varepsilon_6$.
Then $\Gamma_I$ is as follows:
\begin{displaymath}
\xymatrix{ 
\beta\ar@{-}[r]^{2}&\alpha\ar@{-}[r]^{1}&\nu\ar@{-}[r]^{6}&\kappa\ar@{-}[r]^{5}&\delta
}
\end{displaymath}
Let $J$ be the ideal of $\Bbbk Q$ generated by $\varepsilon_2,\varepsilon_3,\varepsilon_4,\varepsilon_5$.
Then $\Gamma_J$ is as follows:
\begin{displaymath}
\xymatrix{ 
\alpha\ar@{-}[r]^{2}&\beta\ar@{-}[r]^{3}&\gamma\ar@{-}[r]^{4}&\delta\ar@{-}[r]^{5}&\kappa
}
\end{displaymath}
Note that $\Gamma_I\cup \Gamma_J$ is the full cycle:
\begin{displaymath}
\xymatrix{ 
\alpha\ar@{-}[r]^{2}&\beta\ar@{-}[r]^{3}&\gamma\ar@{-}[r]^{4}&\delta\ar@{-}[r]^{5}&\kappa\\
&&\nu\ar@{-}[rru]^{6}\ar@{-}[llu]^{1}&&
}
\end{displaymath}
The ideal $IJ$ is generated by $\varepsilon_2$ and $\varepsilon_5$ and $\Gamma_{IJ}$ is as follows:
\begin{displaymath}
\xymatrix{ 
\alpha\ar@{-}[r]^{2}&\beta&&\delta\ar@{-}[r]^{5}&\kappa
}
\end{displaymath}
The ideal $IJ$ is thus a direct sum of two indecomposable summands, the first one is the
ideal generated by $\varepsilon_2$ and the second one is the ideal generated by $\varepsilon_5$.
} 
\end{example}

Denote by $\mathcal{S}$ the set of (isomorphism classes of) indecomposable ideals in $\Bbbk Q$.
If $k>1$, then all projective $\Bbbk Q$-modules are multiplicity free. In this case 
the regular $\Bbbk Q$-$\Bbbk Q$-bimodule ${}_{\Bbbk Q}\Bbbk Q_{\Bbbk Q}$ is multiplicity free
and therefore all subbimodules of this bimodule are uniquely determined by their composition multiplicities. 
This implies that $\mathcal{S}$ is a finite set, in the case $k>1$.
The set $\mathcal{S}$ has the natural structure of a {\em multisemigroup} given, for $I,J\in \mathcal{S}$, by:
\begin{displaymath}
I\star J:=\{K\,:\, K\text{ is an indecomposable summand of }IJ\}. 
\end{displaymath}
We refer to \cite{KM} for more details on multisemigroups and to \cite[Section~5]{GM2} and \cite[Section~3]{Zh2} 
for  more details on multisemigroups of ideals in path algebras.

Let $I$ be an indecomposable  linearized semigroup ideal. We define its {\em width} $\mathfrak{w}(I)$ as 
$\mathfrak{w}(I)=|V_I|$. For $0\leq m\leq 2k$, we define $\mathcal{S}_{\leq m}$ to be the subset of $\mathcal{S}$ 
consisting of  all linearized ideals $I$ such that $\mathfrak{w}(I)\leq m$ or $I=\Bbbk Q$.

\begin{corollary}\label{cor14}
Let $0\leq m\leq 2k$. Then we have the following:
\begin{enumerate}[$($i$)$]
\item\label{cor14.1} The set $\mathcal{S}_{\leq m}$ inherits from $\mathcal{S}$ the structure of 
a multisemigroup with identity. 
\item\label{cor14.2} The multisemigroup $\mathcal{S}_{\leq m}$ is a semigroup if, and only if, $m\leq k$. In this case $\mathcal{S}_{\leq m}$ is a monoid with a zero element.
\item\label{cor14.3} If $k\geq 2$, then $\mathcal{S}_{\leq 2k}=\mathcal{S}$.
\end{enumerate}
\end{corollary}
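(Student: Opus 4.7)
The plan is to handle (i), (ii), (iii) separately, using Theorem~\ref{thm9} and Proposition~\ref{prop11} as the main tools. For~(i), closure of $\mathcal{S}_{\leq m}$ under $\star$ is the only nontrivial check: for $I,J\in\mathcal{S}_{\leq m}$ both distinct from $\Bbbk Q$, each indecomposable summand $K$ of $IJ$ corresponds to a connected component of $\Gamma_{IJ}$ by Theorem~\ref{thm9}, and Proposition~\ref{prop11}(i),(iv) gives $V_{IJ}\subseteq V_I\cap V_J$, so $\mathfrak{w}(K)\leq\min(\mathfrak{w}(I),\mathfrak{w}(J))\leq m$; Corollary~\ref{cor12}(iii) ensures $K$ is linearized, so $K\in\mathcal{S}_{\leq m}$. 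Products with $\Bbbk Q$ are trivial. Similarly, for~(iii), when $k\geq 2$, Proposition~\ref{prop6} shows every algebra ideal is a linearized semigroup ideal, and every linearized indecomposable ideal has width at most $2k$ (the total number of maximal paths in $\pP^*$); together with $\Bbbk Q\in\mathcal{S}_{\leq 2k}$ by definition, this gives $\mathcal{S}_{\leq 2k}=\mathcal{S}$.

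For the ``if'' direction of~(ii), assume $m\leq k$ and take $I,J\in\mathcal{S}_{\leq m}$ both distinct from $\Bbbk Q$. Then $\Gamma_I,\Gamma_J$ are proper connected subgraphs of the $2k$-cycle $\Gamma$ and hence chains, with $|V_I|+|V_J|\leq 2m\leq 2k$. Either the bound is tight and the two chains are complementary sub-arcs with $V_I\cap V_J=\emptyset$, in which case $IJ=0$; or else $\Gamma_I\cup\Gamma_J$ is contained in a proper sub-chain of $\Gamma$, and two sub-chains of a single chain intersect in at most one sub-chain, so $\Gamma_I\cap\Gamma_J$ is connected or empty. Proposition~\ref{prop11}(iv) together with Theorem~\ref{thm9} then forces $IJ$ to be zero or indecomposable, so $|I\star J|\leq 1$ for every pair. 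Hence $\mathcal{S}_{\leq m}$ is a semigroup, in fact a monoid with identity $\Bbbk Q$ and with the (formally adjoined) zero ideal as zero.

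For the ``only if'' direction of~(ii), I need to exhibit, for every $m>k$, a pair $I,J\in\mathcal{S}_{\leq m}$ whose product decomposes, following the strategy of Example~\ref{exm13}. The natural construction is to take chain ideals whose graphs together cover the whole cycle $\Gamma$ and whose intersection $\Gamma_I\cap\Gamma_J$ consists of two components each carrying at least one edge; Proposition~\ref{prop11}(iii) then yields $\Gamma_{IJ}=\Gamma_I\cap\Gamma_J$, and Theorem~\ref{thm9} delivers the two-summand decomposition. For $m\geq k+2$ this is immediate by taking $\mathfrak{w}(I)=\mathfrak{w}(J)=k+2$ positioned symmetrically. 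The borderline case $m=k+1$ is the main obstacle: here $|V_I\cap V_J|$ can only reach $2$, producing two isolated vertices in the intersection graph with no common edges, and one must verify that both such vertices survive in $\Gamma_{IJ}$ after the refinement provided by Proposition~\ref{prop11}(iv)---this ultimately reduces to a head/tail idempotent bookkeeping on the sources and sinks of $Q$.
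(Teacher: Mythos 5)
Your treatment of (i), (iii), and the ``if'' half of (ii) is essentially the paper's own argument (closure via $\Gamma_{IJ}\subseteq\Gamma_I\cap\Gamma_J$, and for $m\leq k$ the passage to $J'$ via Proposition~\ref{prop11}(iv) so that $\Gamma_I\cap\Gamma_{J'}$ is an intersection of two sub-chains of a common chain, hence connected or empty), and those parts are fine.

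The genuine gap is the ``only if'' half of (ii) at the borderline value $m=k+1$, which you correctly identify as the main obstacle but then leave as an unverified ``head/tail idempotent bookkeeping.'' This is not a detail you can defer: the verification actually \emph{fails} for some admissible quivers. By Lemma~\ref{lem24}, a decomposable product of two ideals in $\mathcal{S}_{\leq k+1}$ forces both graphs to be $(k+1)$-vertex chains with disjoint, complementary edge sets, so $\Gamma_I\cap\Gamma_J$ is two isolated antipodal vertices $\omega,\omega'$, and $\omega$ survives into $\Gamma_{IJ}$ only if $\omega=\alpha\beta$ with $\alpha\in I$, $\beta\in J$. When $\omega$ has no internal vertices the only candidate factorizations are $\varepsilon_{\mathbf{h}(\omega)}\cdot\omega$ and $\omega\cdot\varepsilon_{\mathbf{t}(\omega)}$, and both reduce to the single condition that the edge of $\Gamma_I$ at $\omega$ be the sink of $\omega$; since sinks and sources alternate around the cycle, the conditions at $\omega$ and at $\omega'$ agree when $k$ is odd but are \emph{opposite} when $k$ is even. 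Concretely, for the alternating orientation of $\tilde{A}_4$ (so $n=4$, $k=2$, every maximal path a single arrow), a direct computation shows that for every such pair the product is one-dimensional or zero, so $\mathcal{S}_{\leq 3}$ \emph{is} a semigroup even though $3>k$. Hence the construction you sketch cannot be completed uniformly, and claim (ii) as stated needs either a restriction on the signature or a corrected threshold. For $m\geq k+2$ your construction (two $(k+2)$-chains whose intersection has two components each containing an edge, followed by Proposition~\ref{prop11}(iii)) does work, so the problem is isolated to $m=k+1$. It is worth noting that the paper's own proof is no better here --- it dismisses the whole ``only if'' direction with ``the obvious generalization of Example~\ref{exm13}'', and Example~\ref{exm13} uses chains of width $k+2$, not $k+1$ --- so the obstacle you flagged is a real defect in the statement, not merely in your write-up; but as a proof of the corollary as stated, your argument is incomplete at exactly this point.
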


\begin{proof}
To prove that $\mathcal{S}_{\leq m}$ is a semigroup, we need to show that $\mathcal{S}_{\leq m}$ 
is closed under tensor products inside $\mathcal{S}$. But this follows immediately from the facts
that the tensor product of two linearized semigroup ideals is a linearized semigroup ideal, 
and that $\Gamma_{I\otimes_A J}\subset \Gamma_I\cap \Gamma_J$, see Proposition~\ref{prop11}\eqref{prop11.1}. 
This implies claim~\eqref{cor14.1}.

Assume that $m\leq k$. Without loss of generality, we may assume that $I\neq \Bbbk Q\neq J$. 
By Proposition~\ref{prop11}\eqref{prop11.4}, we have
$I\otimes_A J=I\otimes_A J'$, for some indecomposable linearized semigroup ideal $J'\subset J$ 
such that  $\Gamma_{I\otimes_A J'}=\Gamma_I\cap \Gamma_{J'}$. Note that $J'\subset J$ implies that 
$\mathfrak{w}(J')\leq \mathfrak{w}(J)\leq m$. Therefore both $\Gamma_I$ and $\Gamma_{J'}$ are chains
and the intersection $\Gamma_I\cap \Gamma_{J'}$ contains at most one connected component, as $m\leq k$. 
Consequently, $IJ=IJ'$ is indecomposable.

If $m>k$, then the obvious generalization of Example~\ref{exm13} proves existence of 
two indecomposable $I$ and $J$ such that $IJ$ decomposes. This establishes claim~\eqref{cor14.2}.

Claim~\eqref{cor14.3} follows directly from the definitions, completing the proof. 
\end{proof}


\subsection{$\Bbbk Q$ as stair-shaped algebras}\label{s3.7}

We say that a matrix algebra $B\subset\mathrm{Mat}_{r\times r}(\Bbbk)$ is \emph{stair-shaped} if 
it consists of all matrices of the following fixed form
\[\left(\begin{array}{ccccccccccc}
\boldsymbol{*} &\cdots & * &&&&&&&&\\
&\ddots&\vdots&&&&&&&&\\
&&*&&&&&&&&\\
&&\vdots&\ddots&&&&&&&\\
&&*&\cdots &*&\cdots&*&&&&\\
&&&&&\ddots&\vdots&&\\
&&&&&&*&&\\
&&&&&&&\ddots&&&\\
&&&&&&&&*&&\\
&&&&&&&&\vdots&\ddots&\\
&&&&&&&&*&\cdots&\boldsymbol{*}
\end{array}\right).\]
Here the sizes of triangles are arbitrary, all entries different from 
$*$ are supposed to be zero and the value of the two bold entries
(the north-west and the south-east corners) are the same.
Also, here the last triangle may also point north-east instead of south-west. Note that we require 
the two ends of the main diagonal to have the same value. Let $k'$ denote the number of triangles
on this picture which point south-west. 

\begin{example}\label{exm15}
{\rm  Here are two examples of stair-shaped algebras, both with $k'=1$:
\begin{displaymath}
\left\{ 
\left(\begin{array}{ccc}a&b&0\\0&c&0\\0&d&a\end{array}\right)\,:\,a,b,c,d\in\Bbbk
\right\}
\text{ and }
\left\{ 
\left(\begin{array}{cccc}a&b&0&0\\0&c&0&0\\0&d&e&f\\0&0&0&a\end{array}\right)\,:\,a,b,c,d,e,f\in\Bbbk
\right\}.
\end{displaymath}
For the first algebra, the last triangle points south-west.
For the second algebra, the last triangle points north-east.
}
\end{example}

\begin{proposition}\label{prop16}
{\hspace{2mm}}

\begin{enumerate}[$($i$)$]
\item\label{prop16.1}  Let $Q$ be as in Subsection~\ref{s3.1}.
Then $\Bbbk Q$ is isomorphic to a stair-shaped algebra  
with $k'=k$ triangles pointing north-east (and hence the last triangle pointing south-west).
\item\label{prop16.2} Every stair-shaped algebra with the last triangle pointing south-west
is isomorphic to $\Bbbk Q$, for some $Q$ as in Subsection~\ref{s3.1}.
\item\label{prop16.3} Let $Q$ be as in Subsection~\ref{s3.1} and $\alpha$ and $\beta$ be
two composable arrows of $Q$ (i.e. $\beta\alpha\in\pP^*$). Then $\Bbbk Q/(\beta\alpha)$ 
is isomorphic to a stair-shaped algebra with  $k'=k$ and the last triangle north-east.
\item\label{prop16.4} Every stair-shaped algebra with the last triangle pointing north-east
is isomorphic to $\Bbbk Q/(\beta\alpha)$, for some $Q$ be as in Subsection~\ref{s3.1} and 
for some composable arrows $\alpha$ and $\beta$ in $Q$.
\end{enumerate}
\end{proposition}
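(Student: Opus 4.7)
My plan is to prove (i)--(iv) by constructing an explicit isomorphism between $\Bbbk Q$ (or the quotient $\Bbbk Q/(\beta\alpha)$) and the corresponding stair-shaped matrix algebra inside $\mathrm{Mat}_{(n+1)\times(n+1)}(\Bbbk)$. The key idea: the stair's diagonal of length $n+1$ parametrizes a cyclic traversal of $Q$'s vertices (with the NW/SE identification "closing" the cycle), and each maximal path of $Q$ maps to exactly one triangle of the stair.

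For (i), fix a sink $t\in Q$ and enumerate the vertices as $v_0=t,v_1,\dots,v_{n-1}$ in one fixed direction around the underlying cycle, assigning position $i+1$ to $v_i$ and an additional position $n+1$ again to $t$. Define a linear map $\Phi:\Bbbk Q\to\mathrm{Mat}_{(n+1)\times(n+1)}(\Bbbk)$ by $\varepsilon_t\mapsto E_{1,1}+E_{n+1,n+1}$, $\varepsilon_{v_i}\mapsto E_{i+1,i+1}$ for $i\neq 0$, and for every non-trivial path $\omega:x\to y$ by $\omega\mapsto E_{p_y,p_x}$, where $p_x,p_y$ are the positions of $x,y$ inside the unique maximal path of $Q$ containing $\omega$. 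Even in the $k=1$ case, where two maximal paths share both endpoints, a non-trivial sub-path lies in exactly one of them, so $\Phi$ is well-defined. I will check that path concatenation corresponds to matrix-unit multiplication (both vanish unless $\mathbf{h}(\upsilon)=\mathbf{t}(\omega)$, and the resulting position is correct), giving an injective algebra homomorphism. Its image is the stair-shape because each of the $2k$ arcs between consecutive sinks and sources in the cyclic enumeration produces one triangle: NE if the arc's arrows run against the traversal and SW if they run with it. These orientations alternate precisely because arrow direction flips at every sink and source; starting at the sink $t$ forces the first triangle to be NE and the last SW, yielding $k$ of each. Part (ii) is then the straightforward reversal of this construction: from a stair-shape with last SW, read off $n=r-1$ vertices as equivalence classes of diagonal positions, identify sinks and sources from orientation transitions, and orient arrows along each triangle accordingly.

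For (iii), the relation $\beta\alpha=0$ at an interior vertex $y$ (with $\alpha:x\to y$ and $\beta:y\to z$) kills every path of length $\geq 2$ through $y$, since such a path necessarily uses both $\alpha$ and $\beta$ (the unique arrows at $y$ in admissible $\tilde{A}_n$). Consequently the unique maximal path of $Q$ through $y$ splits in the quotient into two new maximal paths $\pi_{\mathrm{pre}}:\text{source}\to y$ and $\pi_{\mathrm{post}}:y\to\text{sink}$. Apply the construction of (i) to the quotient but starting the enumeration at $y$ instead of at an honest sink, so that $y$ occupies both corner positions $1$ and $n+1$. The $2k+1$ maximal paths of the quotient now yield $2k+1$ triangles; the two triangles adjacent to the corner correspond to $\pi_{\mathrm{pre}}$ and $\pi_{\mathrm{post}}$, and both are NE, since the arrow directions on either side of $y$ in the cycle agree (as $y$ is an interior vertex of $Q$). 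Because these two same-oriented triangles sit at opposite ends of the matrix rather than adjacent, they do not collapse, the remaining triangles alternate correctly, and we obtain $k$ SW triangles and $k+1$ NE triangles with last NE, as required. Part (iv) reverses this: the corner of a last-NE stair-shape identifies the distinguished vertex $y$, the two adjacent NE triangles glue along $y$ into a single through-$y$ maximal path that recovers the corresponding maximal path of $Q$, and the arrows of $Q$ incident to $y$ on this merged path provide the composable pair $\alpha,\beta$; the remaining triangles reconstruct the other maximal paths of $Q$ as in (ii).

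The main obstacle is the alternation requirement in (iii): two same-oriented triangles meeting at an ordinary joint collapse into a single larger triangle, so the "extra" triangle introduced by the quotient can only appear across the NW/SE corner, which is the unique position where two same-oriented triangles can coexist without collapsing (they sit at opposite ends of the matrix). This is what forces the specific choice of starting vertex $y$ for the cyclic enumeration in (iii), and what singles out the corner as the vertex carrying the relation in (iv). A secondary care-point is the $k=1$ case in (i), handled cleanly by indexing $\Phi$ via the containing maximal path rather than just the endpoints; once phrased this way, well-definedness and the homomorphism property are immediate.
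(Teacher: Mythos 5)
Your proof is correct and takes essentially the same route as the paper's: walk around the underlying cycle starting from a distinguished vertex (a sink/source for (i)--(ii), the joint vertex $\mathbf{h}(\alpha)=\mathbf{t}(\beta)$ for (iii)--(iv)), which occupies both the north-west and south-east corner positions, send each path to the matrix unit indexed by the positions of its head and tail within its unique containing maximal path, and note that triangle orientation flips exactly at sinks and sources. The only (immaterial) difference is that you anchor the enumeration at a sink where the paper anchors it at a source, and you make explicit the alternation and well-definedness checks that the paper leaves implicit.
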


\begin{proof}
We start by proving claim~\eqref{prop16.1}.
Start from a fixed source $x$ in $Q$, and walk around the underlying circle of $Q$
(in some fixed direction). The zero 
length paths $\varepsilon_y$, for $y$ different from $x$, then correspond to diagonal 
matrix units, in the circular order. The element $\varepsilon_x$ corresponds to the sum of
the south-east and the north-west diagonal matrix units. In this way, each path from $y$ 
to $z$, where $y,z\neq x$, corresponds to the matrix unit in the intersection of the row indexed by $z$
with the column indexed by $y$. If $z=x$ (in particular, this is the case if $y=x$,
as $y$ is a source), then a path from $y$ to $z$ corresponds to a matrix unit
in the first row and the column indexed by $x$ if there are no sinks between $x$ and $y$ 
in the direction of our walk. Otherwise, our path corresponds to a matrix unit
in the last column. This defines the isomorphism which proves claim~\eqref{prop16.1}.
Note that $r=n+1$. For example, the first algebra in Example~\ref{exm15} is obtained in this
way from the Kronecker algebra, that is an admissible orientation of the $\tilde{A}_{2}$ diagram:
\begin{displaymath}
\xymatrix{1\ar@/^1pc/[rr]\ar@/_1pc/[rr]&&2} 
\end{displaymath}

Given a stair-shaped algebra $B$ with the last triangle pointing south-west, we can reverse the
correspondence described in the previous paragraph and thus read off an orientation
$Q$ of the affine Dynkin diagram of type $\tilde{A}_{r-1}$ such that $B$ is isomorphic to $\Bbbk Q$.
This proves claim~\eqref{prop16.2}.

Claims~\eqref{prop16.3} and \eqref{prop16.4} are proved similarly. The only difference is that,
instead of staring at a fixed source, one starts at the vertex $x=\mathbf{h}(\alpha)=\mathbf{t}(\beta)$.
We leave the details to the reader.
\end{proof}

\section{Combinatorics}\label{s4}

\subsection{Ideals and Dyck paths}\label{s4.1}

Recall that a {\em Dyck path} of semilength $n$ is a path on an $(n+1)\times (n+1)$-grid from the 
north-west to the south-east corner which never goes below, but is allowed to touch, the diagonal 
that connects these two corners. The number of Dyck paths of semilength $n$ is given by the $n$-th 
{\em Catalan number} $C_{n}:=\frac{1}{n+1}\binom{2n}{n}$, see \cite[Chapter~6]{St}.
Dyck paths of semilength $n$ correspond bijectively to nilpotent ideals in the associative algebra 
$T_n$ of upper triangular $n\times n$-matrices with coefficients in $\Bbbk$. To state the correspondence, 
we can view an $n\times n$-matrix as a ``filling'' of an $(n+1)\times (n+1)$-grid. Then, the correspondence 
mentioned above associates to a Dyck path the ideal consisting of all matrices which have zero entries 
below the path. 

Similarly one shows that all ideals in $T_n$ are in 
bijection with Dyck path of semilength $n+1$ and hence are enumerated by $C_{n+1}$. 
All such ideals are indecomposable and form a semigroup with respect to
multiplication (alternatively, with respect to tensor product). This 
gives a categorification of the Catalan monoid of all order decreasing and oder preserving
transformations of a finite chain and was studied in detail in \cite{GM1}. Note that
the algebra of all upper triangular $n\times n$-matrices is isomorphic to the path algebra
of a uniform orientation of the (finite type) Dynkin diagram of type $A_n$.

\begin{example}\label{exm17}
{\rm 
The following picture shows the $10$-dimensional ideal $I$ generated by the matrix units $e_{3,3}$ and $e_{4,5}$  
in the algebra of upper triangular  $5\times 5$-matrices:
\begin{center}
\begin{picture}(80,80)
\dashline{2}(20,20)(20,70)
\dashline{2}(30,20)(30,70)
\dashline{2}(40,20)(40,70)
\dashline{2}(50,20)(50,70)
\dashline{2}(60,20)(60,70)
\dashline{2}(70,20)(70,70)
\dashline{2}(20,20)(70,20)
\dashline{2}(20,30)(70,30)
\dashline{2}(20,40)(70,40)
\dashline{2}(20,50)(70,50)
\dashline{2}(20,60)(70,60)
\dashline{2}(20,70)(70,70)
\dottedline(10,10)(10,70)
\dottedline(10,10)(70,10)
\dottedline(10,70)(70,10)
\dottedline(20,10)(20,20)
\dottedline(30,10)(30,20)
\dottedline(40,10)(40,20)
\dottedline(50,10)(50,20)
\dottedline(60,10)(60,20)
\dottedline(70,10)(70,20)
\dottedline(10,20)(20,20)
\dottedline(10,30)(20,30)
\dottedline(10,40)(20,40)
\dottedline(10,50)(20,50)
\dottedline(10,60)(20,60)
\dottedline(10,70)(20,70)
\thicklines
\drawline(10,70)(40,70)
\drawline(40,40)(40,70)
\drawline(40,40)(60,40)
\drawline(60,30)(60,40)
\drawline(60,30)(70,30)
\drawline(70,10)(70,30)
\put(42.50,42.50){$*$}
\put(52.50,42.50){$*$}
\put(62.50,42.50){$*$}
\put(42.50,52.50){$*$}
\put(52.50,52.50){$*$}
\put(62.50,52.50){$*$}
\put(42.50,62.50){$*$}
\put(52.50,62.50){$*$}
\put(62.50,62.50){$*$}
\put(62.50,32.50){$*$}
\end{picture}
\end{center}
Potentially non-zero entires for elements in $I$ are indicated by $*$.
The boundaries of the matrix are indicated by dashed lines. Dotted lines
indicate an ``extension'' of the matrix necessary to include non-nilpotent
ideals. The dotted diagonal is the boundary which is not allowed to 
be crossed by Dyck paths. The Dyck path which corresponds to $I$ is the thick 
solid path on the picture.
}
\end{example}

An alternative way to describe all ideals in $T_n$ is to use the terminology of
{\em generalized Dyck paths}. A generalized Dyck describes the south-west boundary of an ideal.
For the ideal $I$ in Example~\ref{exm17}, the corresponding generalized Dyck paths
is given by the $\bullet$-entires in Figure~\ref{fig0}.
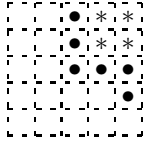
\begin{figure}
\begin{center}
\begin{picture}(80,80)
\dashline{2}(20,20)(20,70)
\dashline{2}(30,20)(30,70)
\dashline{2}(40,20)(40,70)
\dashline{2}(50,20)(50,70)
\dashline{2}(60,20)(60,70)
\dashline{2}(70,20)(70,70)
\dashline{2}(20,20)(70,20)
\dashline{2}(20,30)(70,30)
\dashline{2}(20,40)(70,40)
\dashline{2}(20,50)(70,50)
\dashline{2}(20,60)(70,60)
\dashline{2}(20,70)(70,70)
\put(42.50,42.50){$\bullet$}
\put(52.50,42.50){$\bullet$}
\put(62.50,42.50){$\bullet$}
\put(42.50,52.50){$\bullet$}
\put(52.50,52.50){$*$}
\put(62.50,52.50){$*$}
\put(42.50,62.50){$\bullet$}
\put(52.50,62.50){$*$}
\put(62.50,62.50){$*$}
\put(62.50,32.50){$\bullet$}
\end{picture}
\end{center}
\caption{An example of a generalized Dyck path}\label{fig0}
\end{figure}
As we see from this example, a generalized Dyck path is described by the following rules:
\begin{itemize}
\item It is  a path on an $n\times n$-grid (instead of $(n+1)\times (n+1)$-grid).
\item It starts at some point on the upper row of the grid.
\item Each step is either to the right or down.
\item It ends at some point on the rightmost column of the grid.
\item It never goes under the main diagonal.
\end{itemize}
Note that a generalized Dyck path may be empty (this corresponds to the zero ideal of $T_n$).
Clearly, the number of generalized Dyck paths on a $n\times n$-grid is $C_{n+1}$, the
same as the number of ideals in $T_n$.

In \cite{GM2},  Catalan combinatorics plays important role in enumeration of indecomposable ideals
in path algebras associated to arbitrary orientations of finite type $A$ Dynkin diagrams.
Stair-shaped algebras correspond to orientations of affine type $A$ Dynkin diagrams. Therefore
it is natural to expect that Catalan numbers and some generalizations of Dyck paths should
also appear in the study of ideals for stair-shaped algebras. We describe the corresponding
combinatorics of certain cylindrical generalizations of Dyck paths in this section and
the enumeration of ideal in the next section. The combinatorics we describe generalizes the one described
in \cite{GM2}. We also note that a version of cylindrical Dyck paths appeared in \cite{HR}.

For a quiver $Q$ as in Subsection~\ref{s3.1}, we fix a realization of the algebra
$\Bbbk Q$ as a stair-shaped algebra of the form described in Subsection~\ref{s3.7},
see Proposition~\ref{prop16}. In this realization we have $k$ triangles pointing
north-east and $k$ triangles pointing south-west. Going from top left to bottom right,
let the lengths of the diagonal parts of these triangles be $i_1,i_2,\dots,i_{2k}$.
Note that the triangles corresponding to $i_1,i_3,\dots$ point  north-east while
the triangles corresponding to $i_2,i_4,\dots$ point south-west.
The vector $\mathbf{i}=(i_1,i_2,\dots,i_{2k})$ will be called the {\em signature}
of $Q$. The corresponding stair-shaped algebra will be denoted by $B_{\mathbf{i}}$.
Note that, subtracting vector $(1,1,\dots,1)$ from the signature gives
a vector which records lengths of all maximal paths in $Q$. Note also that $n=i_1+i_2+\dots+i_{2k}-2k$.

Let $I$ be a linearized semigroup ideal in $\Bbbk Q$. Then $I$ has a basis consisting of the
matrix units in the stair-shaped realization of $\Bbbk Q$. Define the {\em boundary} of
$I$ as the set of all matrix units which 
\begin{itemize}
\item belong to the south-west boundary of the ideal $I$ in all triangles pointing north-east;
\item belong to the north-east boundary of the ideal $I$ in all triangles pointing south-west.
\end{itemize}
The boundary of the zero ideal is empty. This definition obvious agrees with the above
notion of the boundary for an ideal in $T_n$. 

\begin{example}\label{exm18}
{\rm
On Figure~\ref{fig1} one finds an example
of an ideal (whose potentially non-zero components are indicated by $*$) and its
boundary (whose elements are indicated by $\bullet$) for the stair-shaped algebra with 
signature $(2,4,4,3)$. The remaining potentially non-zero places of the algebra are indicated by $\circ$.
Note that, in the example given by Figure~\ref{fig1}, the boundary is connected.
However, in general, it can also be disconnected.
} 
\end{example}

\begin{figure}
\begin{center}
\begin{picture}(120,120)
\dashline{2}(10,10)(10,110)
\dashline{2}(20,10)(20,110)
\dashline{2}(30,10)(30,110)
\dashline{2}(40,10)(40,110)
\dashline{2}(50,10)(50,110)
\dashline{2}(60,10)(60,110)
\dashline{2}(70,10)(70,110)
\dashline{2}(80,10)(80,110)
\dashline{2}(90,10)(90,110)
\dashline{2}(100,10)(100,110)
\dashline{2}(110,10)(110,110)
\dashline{2}(10,10)(110,10)
\dashline{2}(10,20)(110,20)
\dashline{2}(10,30)(110,30)
\dashline{2}(10,40)(110,40)
\dashline{2}(10,50)(110,50)
\dashline{2}(10,60)(110,60)
\dashline{2}(10,70)(110,70)
\dashline{2}(10,80)(110,80)
\dashline{2}(10,90)(110,90)
\dashline{2}(10,100)(110,100)
\dashline{2}(10,110)(110,110)
\put(12.50,102.50){$\circ$}
\put(22.50,102.50){$\circ$}
\put(22.50,92.50){$\circ$}
\put(22.50,82.50){$\circ$}
\put(22.50,72.50){$\bullet$}
\put(22.50,62.50){$*$}
\put(32.50,82.50){$\circ$}
\put(32.50,72.50){$\bullet$}
\put(32.50,62.50){$\bullet$}
\put(42.50,72.50){$\circ$}
\put(42.50,62.50){$\bullet$}
\put(52.50,62.50){$\bullet$}
\put(62.50,62.50){$\bullet$}
\put(62.50,52.50){$\circ$}
\put(72.50,62.50){$\bullet$}
\put(72.50,52.50){$\bullet$}
\put(72.50,42.50){$\bullet$}
\put(82.50,62.50){$*$}
\put(82.50,52.50){$*$}
\put(82.50,42.50){$\bullet$}
\put(82.50,32.50){$\bullet$}
\put(82.50,22.50){$\bullet$}
\put(82.50,12.50){$*$}
\put(92.50,22.50){$\bullet$}
\put(92.50,12.50){$\bullet$}
\put(102.50,12.50){$\circ$}
\end{picture}
\end{center} 
\caption{An ideal and its boundary}\label{fig1} 
\end{figure}
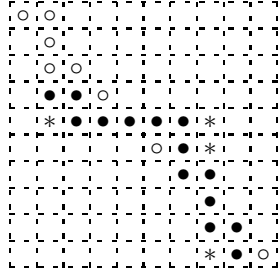

An axiomatic description of a boundary of an ideal is collected in the following definition.

\begin{definition}\label{def19}
{\rm 
Given an $n\times n$ grid and a signature $\mathbf{i}=(i_1,\dots,i_{2k})$
such that $i_1+i_2+\dots+i_{2k}-2k=n$, a {\em cylindrical generalized Dyck path $\pi$
of signature $\mathbf{i}$} is a possibly empty collection of paths on this grid 
satisfying the following conditions:
\begin{enumerate}[$($a$)$]
\item\label{def19.1} Each step of $\pi$ belongs to one of the triangles of $B_{\mathbf{i}}$.
\item\label{def19.2} The intersection of $\pi$ with each of the triangles of
$B_{\mathbf{i}}$ is a generalized Dyck path for that triangle.
\item\label{def19.3} The north-west corner belongs to $\pi$ if and only if 
the south-east corner does.
\end{enumerate} 
}
\end{definition}

Note that condition of Definition~\ref{def19}\eqref{def19.3} corresponds to the cylindrical nature of our 
generalized Dyck paths. The connection between ideals and cylindrical generalized Dyck path is given 
by the following statement. A cylindrical generalized Dyck path is called {\em connected} provided that
it is a connected path, after identification of the north-west and the south-east corners.

\begin{proposition}\label{prop20}
In the situation as above, we have:
\begin{enumerate}[$($i$)$] 
\item\label{prop20.1} The boundary of any ideal $I$ in $\Bbbk Q$ is a cylindrical generalized Dyck path,
which we will denote by $\pi(I)$. This correspondence between ideals and 
cylindrical generalized Dyck paths is bijective. 
\item\label{prop20.2} Indecomposable ideals correspond to connected cylindrical generalized Dyck paths.
\item\label{prop20.3} Nilpotent ideals correspond to cylindrical generalized Dyck paths that 
have empty intersection with the main diagonal.
\end{enumerate}
\end{proposition}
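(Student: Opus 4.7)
The plan is to reduce each statement to a local analysis inside each triangle of the stair-shaped realization $B_{\mathbf{i}}$, and then glue the pieces along the shared idempotents.

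For claim~\eqref{prop20.1}, I would verify the axioms of Definition~\ref{def19} for $\pi(I)$. Axiom~\eqref{def19.1} is immediate from the definition of the boundary. For axiom~\eqref{def19.2}, each triangle of $B_{\mathbf{i}}$ is, after re-indexing, a copy of an upper triangular matrix algebra $T_m$ sitting as a corner of $B_{\mathbf{i}}$, and the restriction of a two-sided ideal of $B_{\mathbf{i}}$ to that triangle yields an ideal of this local $T_m$; the classical bijection between ideals in upper triangular matrix algebras and generalized Dyck paths recalled in Subsection~\ref{s4.1} then gives a generalized Dyck path in each triangle. For axiom~\eqref{def19.3}, by Proposition~\ref{prop16}\eqref{prop16.1} the north-west and south-east diagonal corners of $B_{\mathbf{i}}$ both represent the same idempotent $\varepsilon_x$ of the distinguished source $x$, so one corner lies in the boundary iff $\varepsilon_x\in I$ iff the other one does too. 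Bijectivity is established by constructing the inverse $\pi\mapsto I_\pi$, where $I_\pi$ is the span of all matrix units lying on or to the interior side of $\pi$ in each triangle; that $I_\pi$ is a two-sided ideal reduces to a local check inside each triangle using the triangular multiplication law, and the two maps are mutually inverse by construction.

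For claim~\eqref{prop20.2}, I would compare $\Gamma_I$ with $\pi(I)$ triangle by triangle. The triangles of $B_{\mathbf{i}}$ correspond bijectively to the maximal paths in $Q$, so the vertices of $\Gamma_I$ are exactly the triangles whose portion of $\pi(I)$ includes the top corner, equivalently those in which $I$ contains the maximal path. Each edge of $\Gamma_I$ comes from an idempotent $\varepsilon_x$ with $x$ a source or sink; such $\varepsilon_x$ sits on the main diagonal at the corner shared by two adjacent triangles, and $\varepsilon_x\in I$ precisely when $\pi(I)$ passes through this corner. Hence the Dyck pieces in two adjacent triangles are linked in $\pi(I)$ (taking the cylindrical identification of the two extreme corners into account) iff the corresponding edge of $\Gamma_I$ is present, so $\Gamma_I$ is connected iff $\pi(I)$ is connected in the cylindrical sense. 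Theorem~\ref{thm9} then yields claim~\eqref{prop20.2}. For claim~\eqref{prop20.3}, $I$ is nilpotent iff it contains no $\varepsilon_x$; by the same analysis this is equivalent to $\pi(I)$ avoiding every main-diagonal position, i.e., having empty intersection with the main diagonal.

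The main technical obstacle will be handling the cylindrical identification cleanly. The distinguished vertex $x$ where the stair-shape wraps around contributes $\varepsilon_x$ to the two extreme corners of $B_{\mathbf{i}}$ simultaneously, and one must verify both that condition~\eqref{def19.3} of Definition~\ref{def19} genuinely encodes membership of $\varepsilon_x$ in $I$ and that the cylindrical notion of connectedness of $\pi(I)$, taken after identifying these two corners, matches the single edge of $\Gamma_I$ at $x$ (not two separate edges). A careful bookkeeping of how the matrix-unit basis of $B_{\mathbf{i}}$ is indexed by paths of $Q$, as extracted from the proof of Proposition~\ref{prop16}, should resolve this.
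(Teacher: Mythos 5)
Your argument is correct and follows essentially the same route as the paper, which simply records that claim~\eqref{prop20.1} is immediate from the definitions, that claim~\eqref{prop20.3} reflects the fact that an ideal is nilpotent exactly when it contains no primitive idempotent (the diagonal entries of $B_{\mathbf{i}}$), and that claim~\eqref{prop20.2} follows from Theorem~\ref{thm9} together with the observation that the triangles meeting $\pi(I)$ are precisely the vertices of $\Gamma_I$ while the diagonal entries of $\pi(I)$ are precisely its edges. One small imprecision in your write-up of claim~\eqref{prop20.2}: a vertex of $\Gamma_I$ corresponds to a triangle whose piece of $\pi(I)$ is \emph{nonempty}, not one whose piece contains the apex matrix unit $e_{1,m}$ (that unit lies in $I$ whenever the restriction of $I$ to the triangle is nonzero, but it need not lie on the boundary); your parenthetical ``equivalently those in which $I$ contains the maximal path'' is the correct criterion and is what your connectedness argument actually uses.
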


\begin{proof}
Claim~\eqref{prop20.1} follows directly from definitions. Claim~\eqref{prop20.3} follows from the
fact that an ideal is nilpotent if and only if it does not contain any of the primitive
idempotents of $\Bbbk Q$ (these idempotents are given by the diagonal elements in $B_{\mathbf{i}}$).

Claim~\eqref{prop20.2} follows from Theorem~\ref{thm9} and the observation that triangles intersecting
with $\pi(I)$ are exactly the vertices of $\Gamma_I$ and the diagonal elements of $\pi(I)$ are exactly
the edges of $\Gamma_I$.
\end{proof}

\subsection{Enumeration of ideals}\label{s4.2}

Assume that the quiver  $Q$ is as in Subsection~\ref{s3.1}. Let $\mathbf{i}$ be the signature of $Q$.
For simplicity, we define $i_{2k+1}:=i_1$, $i_{2k+2}:=i_2$ and so on. Similarly, we define
$i_0:=i_{2k}$, $i_{-1}:=i_{2k-1}$ and so on. For each $s\in\mathbb{Z}$, we also define
$l_s:=i_s-1$. Then $l_s$ is the length of a maximal path in $Q$ and the vector 
$(l_1,l_2,\dots,l_{2k})$ provides a complete list of lengths of maximal paths in $Q$. 
Note that $i_s\geq 2$, for all $s$.

Recall also that the number of ideals in the algebra $T_n$ of upper triangular $n\times n$ matrices
is exactly $C_{n+1}$. All ideals in $T_n$ are indecomposable (as $T_n$-$T_n$-bimodules). 
Further, the number of nilpotent ideals in $T_n$ equals $C_{n}$.

\begin{theorem}\label{thm21}
Let $Q$ be as in Subsection~\ref{s3.1} and of signature $\mathbf{i}$. 
\begin{enumerate}[$($i$)$]
\item\label{thm21.1} The number of indecomposable ideals in $\Bbbk Q$ is 
\begin{multline*}
\prod_{s=1}^{2k}C_{i_s-1}+\\+
\sum_{t=1}^{2k}\sum_{s=1}^{2k-1}(C_{i_t}-C_{i_t-1})
(C_{i_{t+s-1}}-C_{i_{t+s-1}-1})\prod_{r=2}^{s-1}C_{i_{t+r-1}-1}+\\
+\sum_{t=1}^{2k}(C_{i_t+1}-2C_{i_t}+C_{i_t-1}-1).
\end{multline*}
\item\label{thm21.2} The number of nilpotent ideals in $\Bbbk Q$ is $\displaystyle\prod_{s=1}^{2k}C_{i_s}$. 
\item\label{thm21.3} The number of nilpotent indecomposable ideals in $\Bbbk Q$ is 
$\displaystyle\left(\sum_{s=1}^{2k}C_{i_s}\right)-2k$.
\end{enumerate}
\end{theorem}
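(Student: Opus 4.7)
My plan is to apply Proposition~\ref{prop20} throughout, translating each claim into a count of cylindrical generalized Dyck paths with prescribed properties. The key preparatory computation is, in each triangle of diagonal length $i_s$ (which as an algebra is isomorphic to the upper-triangular algebra $T_{i_s}$), to classify Dyck paths by whether they pass through the NW corner, the SE corner, both, or neither. Using the bijection between ideals of $T_n$ containing $e_{11}$ (respectively $e_{nn}$) and ideals of the smaller block $T_{n-1}$, a brief inclusion-exclusion yields the counts $C_{i_s}-C_{i_s-1}$, $C_{i_s}-C_{i_s-1}$, $C_{i_s-1}$, and $C_{i_s+1}-2C_{i_s}+C_{i_s-1}$, respectively.

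Part~(ii) then follows immediately: a nilpotent ideal avoids all diagonal entries, in particular every shared corner, so the compatibility condition in Definition~\ref{def19}\eqref{def19.3} holds vacuously, and a nilpotent ideal of $T_{i_s}$ can be chosen independently in each triangle. These are classically counted by $C_{i_s}$, giving the product $\prod_{s}C_{i_s}$.

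For part~(iii), a nilpotent indecomposable ideal corresponds to a connected diagonal-avoiding cylindrical Dyck path. Avoidance of the diagonal forces the graph $\Gamma_I$ of Subsection~\ref{s3.3} to have no edges, and connectedness together with non-emptiness then forces it to have exactly one vertex. The ideal is therefore supported in a single triangle and is a non-empty nilpotent ideal of that triangle, contributing $C_{i_s}-1$ options per triangle and $\sum_s C_{i_s}-2k$ in total.

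For part~(i), I would classify the connected non-empty subgraphs of the $2k$-cycle $\Gamma$ into three mutually exclusive classes matching the three summands. If $\Gamma_I$ is the full cycle, every triangle's Dyck path must touch both of its shared corners, giving $\prod_s C_{i_s-1}$. If $\Gamma_I$ is a single isolated vertex, the unique non-empty triangle has a non-empty Dyck path touching neither shared corner, giving $C_{i_s+1}-2C_{i_s}+C_{i_s-1}-1$ per triangle and reproducing the third summand after summation. Otherwise $\Gamma_I$ is a proper chain of at least two consecutive triangles, and the two endpoint triangles touch exactly their interior shared corner (contributing a factor $C_{i_s}-C_{i_s-1}$ each) while the intermediate triangles touch both shared corners (contributing $C_{i_s-1}$ each); summing the corresponding products over all starting positions and chain lengths yields the double-sum second term. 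The main subtlety is the cyclic indexing of chains, in particular ensuring that chains which wrap around the cycle are enumerated exactly once and that the maximal-length proper chain is accounted for correctly in the cyclic indexing convention used in the theorem statement.
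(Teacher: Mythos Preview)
Your proposal is correct and follows essentially the same route as the paper's proof: translate via Proposition~\ref{prop20} to cylindrical generalized Dyck paths, handle the nilpotent cases with the product and sum rules, and for part~(i) split according to whether $\Gamma_I$ is the full cycle, a single vertex, or a proper chain. The only minor difference is that you derive the per-triangle corner counts $C_{i_s-1}$, $C_{i_s}-C_{i_s-1}$, and $C_{i_s+1}-2C_{i_s}+C_{i_s-1}-1$ directly via inclusion-exclusion, whereas the paper cites \cite[Proposition~27(ii) and Lemma~29]{GM2} for these; this is a presentational choice rather than a different argument.
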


\begin{proof}
An ideal $I$ of $\Bbbk Q$ is nilpotent and indecomposable if and only if $\pi(I)$ is connected
and contains no diagonal elements, see Proposition~\ref{prop20}. Therefore $I$ is a nilpotent ideal
inside one of the triangular parts of $B_{\mathbf{i}}$. This implies claim~\eqref{thm21.3} using 
the rule of sum and taking into account that zero ideals are not indecomposable. 
This also implies claim~\eqref{thm21.2} using the rule of product. So, it remains to
prove claim~\eqref{thm21.1}.

The graph $\Gamma_I$ of an indecomposable ideal $I$ is connected by Theorem~\ref{thm9}.
Let $s$ be the number of vertices in $\Gamma_I$. If $s=0$, then $I$ is the zero ideal
and hence is not indecomposable. If $s=1$, then $\pi(I)$ is a generalized Dyck path 
of the corresponding $T_m$ which does not contain any endpoints of the triangle.
The number of such Dyck paths is counted in \cite[Proposition~27(ii)]{GM2} and equals 
\begin{displaymath}
C_{m+1}-2C_{m}+C_{m-1}-1. 
\end{displaymath}

If $s=2k$, then $\Gamma_I$ is a full circle. Then $I$ is given by choosing, in each triangle
of the algebra $B_{\mathbf{i}}$, an ideal of the corresponding $T_m$ which contains the
diagonal endpoints of this triangle. This correspond to a Dyck path of semilength $m-2$.
Therefore we get the total number 
\begin{displaymath}
\prod_{s=1}^{2k}C_{i_s-1} 
\end{displaymath}
of ideals $I$ for which $\Gamma_I$ is a full circle.

If $1<s<2k$, then $\Gamma_I$ is a chain with two endpoints. 
Let $t\in\{1,2,\dots,2k\}$ be one of the endpoints of this
chain. We assume that the chain goes from $t$ in the circular order of our walk (which was used
to identify $\Bbbk Q$ with $B_{\mathbf{i}}$). Then the second endpoint of the chain
will be $t+s-1$, modulo $2k$. The cylindrical generalized Dyck paths $\pi(I)$
intersects only the triangles corresponding to vertices of $\Gamma_I$. If this vertex is not
one of the endpoints, then the corresponding component of $\pi_I$ is counted 
in the previous paragraph.
For the endpoints, the corresponding component of $\pi(I)$ should contain one end of the
corresponding triangle for $T_r$, but not the other. The number of such components equals
$C_r-C_{r-1}$, see \cite[Lemma~29]{GM2}.

Therefore the number of $I$ such that $\Gamma_I$ is a chain with two endpoints is given by
\begin{displaymath}
\sum_{t=1}^{2k}\sum_{s=1}^{2k-1}(C_{i_t}-C_{i_t-1})
(C_{i_{t+s-1}}-C_{i_{t+s-1}-1})\prod_{r=2}^{s-1}C_{i_{t+r-1}-1} 
\end{displaymath}
Claim~\eqref{thm21.1} now follows by putting all these formulae together.
\end{proof}

The formulae in Theorem~\ref{thm21} become much nicer if one assumes that all maximal
paths in $Q$ have the same length.

\begin{corollary}\label{cor22}
Let $Q$ be as in Subsection~\ref{s3.1} and of signature $\mathbf{i}$.
Assume that this signature is constant in the sense that $i_1=i_2=\dots =i_{2k}=:i$.
Then we have the following:
\begin{enumerate}[$($i$)$]
\item\label{cor22.1} If $i>2$, then the number of indecomposable ideals in $\Bbbk Q$ is
\[2k\left(C_{i+1}-2C_{i}+C_{i-1}-1+(C_{i}-C_{i-1})^2\frac{C_{i-1}^{2k-1}-1}{C_{i-1}-1}\right)+C_{i-1}^{2k}.\]
\item\label{cor22.2} If $i=2$, then the number of indecomposable ideals is $4k^2+1$.
\item\label{cor22.3} The number of nilpotent ideals in $\Bbbk Q$ is $C_{i}^{2k}$.
\item\label{cor22.4} The number of nilpotent indecomposable ideals in $\Bbbk Q$ is $2k(C_{i}-1)$.
\end{enumerate}
\end{corollary}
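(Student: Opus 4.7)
The plan is to specialize Theorem~\ref{thm21} to the constant signature $i_1 = i_2 = \cdots = i_{2k} = i$; all four statements will follow by direct substitution, with the only twist being the degeneration of a geometric series when $i=2$.

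Parts (iii) and (iv) are immediate from Theorem~\ref{thm21}(ii) and (iii): the product $\prod_{s=1}^{2k} C_{i_s}$ collapses to $C_i^{2k}$, and the sum $\sum_{s=1}^{2k} C_{i_s} - 2k$ collapses to $2k(C_i-1)$.

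For (i) and (ii), I would substitute into Theorem~\ref{thm21}(i). The first summand $\prod_{s=1}^{2k} C_{i_s-1}$ becomes $C_{i-1}^{2k}$ and the third summand $\sum_{t=1}^{2k}(C_{i_t+1}-2C_{i_t}+C_{i_t-1}-1)$ becomes $2k(C_{i+1}-2C_i+C_{i-1}-1)$. In the middle double sum, each inner summand $(C_{i_t}-C_{i_t-1})(C_{i_{t+s-1}}-C_{i_{t+s-1}-1})\prod_{r=2}^{s-1}C_{i_{t+r-1}-1}$ reduces to $(C_i-C_{i-1})^2\, C_{i-1}^{s-2}$ (with the empty product understood as $1$ for small $s$), independent of $t$. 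Hence the $t$-sum contributes a factor $2k$, and the remaining $s$-sum is a geometric series in $C_{i-1}$ with $2k-1$ terms.

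When $i>2$, we have $C_{i-1}\geq C_2 = 2$, so the geometric series evaluates cleanly to $\frac{C_{i-1}^{2k-1}-1}{C_{i-1}-1}$, which yields the formula in (i). The main obstacle — and the reason for the case split — is $i=2$: here $C_{i-1}=C_1=1$, the denominator in (i) vanishes, and the closed-form formula does not apply. I would handle this case directly: each term in the geometric series equals $1$, so the sum equals $2k-1$. Plugging in $C_1=1$, $C_2=2$, $C_3=5$ gives $2k\cdot 1 + 2k \cdot 1 \cdot (2k-1) + 1 = 4k^2+1$, establishing (ii).
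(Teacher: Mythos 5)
Your overall strategy --- specialize Theorem~\ref{thm21} to the constant signature --- is exactly what the paper intends (it offers no separate proof of Corollary~\ref{cor22}), and parts (ii), (iii) and (iv) check out. But in part (i) there is a genuine gap at the evaluation of the middle double sum. You correctly record that each inner summand becomes $(C_i-C_{i-1})^2\,C_{i-1}^{e(s)}$, where $e(s)$ is the number of factors in $\prod_{r=2}^{s-1}$, i.e.\ $e(s)=\max(s-2,0)$; for $s=1,\dots,2k-1$ this gives exponents $0,0,1,\dots,2k-3$. That is \emph{not} a geometric series with $2k-1$ distinct powers: it sums to $1+\frac{C_{i-1}^{2k-2}-1}{C_{i-1}-1}$, which differs from the claimed $\frac{C_{i-1}^{2k-1}-1}{C_{i-1}-1}$ whenever $k\geq 2$ and $i>2$. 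For instance, with $k=2$ and $i=3$ the two evaluations give $4$ versus $7$, and the resulting totals are $180$ versus the correct $288$. So the substitution, carried out literally as you set it up, does not produce the formula in (i); the last step silently replaces your own sum by a different one.

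The source of the trouble is an indexing slip in the printed statement of Theorem~\ref{thm21}(i): a connected proper subgraph of the $2k$-cycle $\Gamma$ having at least one edge is a chain with $v$ vertices for any $2\leq v\leq 2k$ (the Hamiltonian chain with $2k$ vertices and $2k-1$ edges must be included), and such a chain has two endpoint triangles contributing $C_i-C_{i-1}$ each and $v-2$ interior triangles contributing $C_{i-1}$ each. Summing $C_{i-1}^{v-2}$ over $v=2,\dots,2k$ yields exponents $0,1,\dots,2k-2$ and hence exactly the geometric series $\frac{C_{i-1}^{2k-1}-1}{C_{i-1}-1}$ appearing in (i). The double sum as printed in Theorem~\ref{thm21}(i) (range $s=1,\dots,2k-1$ with product $\prod_{r=2}^{s-1}$) does not implement this count, so to complete your argument you must either redo the chain enumeration as above or explicitly correct the off-by-one before substituting. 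Your $k=1$ and $i=2$ computations survive only because in those cases all relevant powers of $C_{i-1}$ equal $1$, so the exponents are invisible; part (i) for $k\geq 2$ is precisely where the discrepancy matters.
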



\subsection{Multiplication of indecomposable ideals and a combinatorial description for decomposability of the product}\label{s4.3}

In this subsection we study multiplication in the multisemigroup $\mathcal{S}$. To start with,
we give explicit formulae for multiplication with those algebra ideals in $\Bbbk Q$ 
which are not linearized semigroup ideals.

\begin{lemma}\label{lem23}
Assume that $k=1$ and $\omega$ and $\upsilon$ are the two different longest paths in $Q$. Let $x$ be the source
in $Q$ and $y$ be the sink in $Q$. Let $a\in\Bbbk\setminus\{0\}$. Then, for any indecomposable algebra ideal 
$I$ in $\Bbbk Q$, we have
\begin{displaymath}
(\omega+a\upsilon)\cdot I\cong
\begin{cases}
(\omega+a\upsilon), &\varepsilon_x\in I;\\
0, & \text{ otherwise;} 
\end{cases}
\end{displaymath}
and 
\begin{displaymath}
I\cdot (\omega+a\upsilon)\cong
\begin{cases}
(\omega+a\upsilon), &\varepsilon_y\in I;\\
0, & \text{ otherwise.} 
\end{cases}
\end{displaymath}
In particular, $(\omega+a\upsilon)\cdot(\omega+b\upsilon)=0$, for any $b\in\Bbbk\setminus\{0\}$.
\end{lemma}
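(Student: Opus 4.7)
The plan is to reduce everything to the elementary computation of $(\omega + a\upsilon)\cdot\varpi$ and $\varpi\cdot(\omega + a\upsilon)$ for single paths $\varpi\in\pP^*$, and then to assemble the statement using the classification from Proposition~\ref{prop6} of indecomposable ideals in the $k=1$ case.

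The key geometric input is that, when $k=1$, both maximal paths $\omega$ and $\upsilon$ share the unique source $x$ as tail and the unique sink $y$ as head, i.e.\ $\mathbf{t}(\omega)=\mathbf{t}(\upsilon)=x$ and $\mathbf{h}(\omega)=\mathbf{h}(\upsilon)=y$, with $x\neq y$ since $n\geq 2$. By the concatenation rule in $\pP$, the product $\omega\varpi$ is nonzero only when $\mathbf{h}(\varpi)=\mathbf{t}(\omega)=x$; but since $x$ is a source, the only path ending at $x$ is $\varepsilon_x$, giving $\omega\varepsilon_x=\omega$ (and the same for $\upsilon$). Symmetrically, $\varpi\omega$ and $\varpi\upsilon$ are nonzero only when $\varpi=\varepsilon_y$, in which case they equal $\omega$ and $\upsilon$ respectively.

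To prove the main formulae, I would split into the two cases of Proposition~\ref{prop6}. If $I$ is a linearized semigroup ideal, then $I$ has a $\Bbbk$-basis of paths, and by linearity $(\omega+a\upsilon)\cdot b=c_b(\omega+a\upsilon)$, where $c_b$ denotes the coefficient of $\varepsilon_x$ in the expansion of $b$ in this path basis. Such a coefficient is nonzero for some $b\in I$ if and only if the basis path $\varepsilon_x$ belongs to $I$; this yields the desired formula for $(\omega+a\upsilon)\cdot I$, and an identical argument with $y$ in place of $x$ handles $I\cdot(\omega+a\upsilon)$. If $I$ is not a linearized semigroup ideal, then by Proposition~\ref{prop6}\eqref{prop6.2} we have $I=\Bbbk(\omega+b\upsilon)$ for some $b\neq 0$; here $\varepsilon_x\notin I$ and $\varepsilon_y\notin I$, so both one-sided products vanish, and the concluding identity $(\omega+a\upsilon)(\omega+b\upsilon)=0$ also follows directly from the observation that each of the four products $\omega\omega,\omega\upsilon,\upsilon\omega,\upsilon\upsilon$ vanishes because each would require $y=x$.

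There is no genuine obstacle: the lemma is essentially forced by the source/sink geometry at $k=1$ and by Proposition~\ref{prop6}. The only minor point requiring attention is the passage from the computed equality to the stated isomorphism of bimodules: in the nonzero case, $(\omega+a\upsilon)\cdot I$ is a subbimodule of the one-dimensional ideal $\Bbbk(\omega+a\upsilon)$ that contains $\omega+a\upsilon$ itself, hence coincides with it as an ideal, and \emph{a fortiori} is isomorphic to it.
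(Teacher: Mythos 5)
Your proposal is correct and follows essentially the same route as the paper: reduce to products with single paths, use that $x$ being a source (resp.\ $y$ a sink) forces $\varepsilon_x$ (resp.\ $\varepsilon_y$) to be the only path with head $x$ (resp.\ tail $y$), handle the non-linearized ideals via Proposition~\ref{prop6}\eqref{prop6.2} and the observation $x\neq y$. Your write-up is merely more explicit than the paper's (which leaves the path-basis linearity argument and the one-dimensionality step implicit), so there is nothing substantive to add.
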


\begin{proof}
The formula $(\omega+a\upsilon)\cdot(\omega+b\upsilon)=0$, for any $b\in\Bbbk\setminus\{0\}$,
follows directly from the observation that $x\neq y$. It remains to consider the case when $I$
is a linearized semigroup ideal. If $\varepsilon_x\in I$, then 
$(\omega+a\upsilon)\cdot I=(\omega+a\upsilon)$ follows directly from the fact that 
$(\omega+a\upsilon)$ is $1$-dimensional. If $\varepsilon_x\not\in I$, then $I$ contains no
paths which terminate at $x$, as $x$ is a source of $Q$. Therefore 
$(\omega+a\upsilon)\cdot I=0$ in this case. The second formula is proved similarly.
\end{proof}

From Lemma~\ref{lem23} it follows that, for $k=1$, 
algebra ideals which are not linearized semigroup ideals form a nilpotent ideal in
$\mathcal{S}$. Now let us consider multiplication of linearized semigroup ideals.
Let us split all indecomposable linearized semigroup ideals into three types.
\begin{enumerate}[$($I$)$]
\item Ideals of type I are those ideals whose graph is the full circle.
\item Ideals of type II are those ideals whose graph is a chain with at least two vertices.
\item Ideals of type III are those ideals whose graph consists of a single vertex.
\end{enumerate}
The arguments in the proof of Corollary~\ref{cor12} imply the following:

\begin{lemma}\label{lem24}
Let $I$ and $J$ be two indecomposable linearized semigroup ideals in $\Bbbk Q$. If
$I\cdot J$ is not indecomposable, then both $I$ and $J$ are of type {\rm II} and 
$\Gamma_I\cup \Gamma_J$ is the full circle.
\end{lemma}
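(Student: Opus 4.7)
The plan is to turn the case analysis sketched in the proof of Corollary~\ref{cor12} into a complete argument. By Theorem~\ref{thm9}, decomposability of a non-zero linearized semigroup ideal is equivalent to disconnectedness of its graph, so the claim reduces to showing that, in all cases other than the ``both type~II with union covering the full circle'' case, the graph $\Gamma_{IJ}$ is connected.

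First I would handle the type~I and type~III cases. If either $I$ or $J$ is of type~III, then the corresponding vertex set has a single element, so Proposition~\ref{prop11}(i) forces $|V_{IJ}|\leq 1$ and $E_{IJ}=\emptyset$, making $\Gamma_{IJ}$ trivially connected. If $I$ is of type~I, so $\Gamma_I=\Gamma_{\Bbbk Q}$ is the full circle, and $J$ is of type~I or~II, then $\Gamma_I\cap\Gamma_J=\Gamma_J$ is connected and, by connectedness of the chain or circle $\Gamma_J$ together with $|V_J|\geq 2$, has no isolated vertices; Proposition~\ref{prop11}(iii) then yields $\Gamma_{IJ}=\Gamma_J$, which is connected. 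The symmetric case where $J$ is of type~I is identical, so it remains to consider the case when both $I$ and $J$ are of type~II.

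In this remaining case I would invoke Proposition~\ref{prop11}(iv) to replace $J$ by an indecomposable subideal $J'\subseteq J$ with $IJ=IJ'$ and $\Gamma_{IJ}=\Gamma_I\cap\Gamma_{J'}$. Since $J'$ is indecomposable, $\Gamma_{J'}$ is a connected subgraph of the chain $\Gamma_J$, hence itself a chain (the degenerate single-vertex case reduces to the type~III situation already handled). Viewing both $\Gamma_I$ and $\Gamma_{J'}$ as proper arcs inside the cycle $\Gamma_{\Bbbk Q}$, I would then invoke the purely combinatorial observation that the intersection of two proper arcs of a cycle can be disconnected only when the two arcs together cover the whole cycle. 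Hence, for $IJ$ to be decomposable, $\Gamma_I\cap\Gamma_{J'}$ must be disconnected, forcing $\Gamma_I\cup\Gamma_{J'}=\Gamma_{\Bbbk Q}$. Since $\Gamma_{J'}\subseteq\Gamma_J$, the conclusion $\Gamma_I\cup\Gamma_J=\Gamma_{\Bbbk Q}$ follows.

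The main obstacle is the passage from $J$ to the subideal $J'$: Proposition~\ref{prop11}(i) alone only gives the inclusion $\Gamma_{IJ}\subseteq\Gamma_I\cap\Gamma_J$, which is too weak to conclude anything about connectedness (the intersection may carry isolated vertices that are not actually present in $\Gamma_{IJ}$). The fact that Proposition~\ref{prop11}(iv) both preserves indecomposability of the second factor and upgrades this inclusion to the equality $\Gamma_{IJ}=\Gamma_I\cap\Gamma_{J'}$ is precisely what makes the combinatorial argument of the third paragraph go through.
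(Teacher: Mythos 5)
Your proof is correct and follows essentially the same route as the paper, whose own ``proof'' of Lemma~\ref{lem24} simply points to the case analysis inside the proof of Corollary~\ref{cor12}: reduce to connectedness of $\Gamma_{IJ}$ via Theorem~\ref{thm9}, dispose of the type~I and type~III cases via Proposition~\ref{prop11}, and observe that two arcs of a cycle can have disconnected intersection only if their union is the whole cycle. The only divergence is your detour through $J'$ via Proposition~\ref{prop11}\eqref{prop11.4}, which is harmless but not actually needed: parts \eqref{prop11.1}--\eqref{prop11.2} already show $\Gamma_{IJ}$ contains every edge of the chain $\Gamma_I\cap\Gamma_J$ together with both endpoints of each such edge, so when $\Gamma_I\cap\Gamma_J$ is a connected chain the graph $\Gamma_{IJ}$ is automatically connected (or empty), which is how the paper argues.
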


\begin{proof}
This is proved during the proof of  Corollary~\ref{cor12}.
\end{proof}

Let us now give more explicit description of  when, for two  indecomposable linearized semigroup ideals
$I$ and $J$ of type II, their product $IJ$ decomposes into a direct sum of two ideals.

\begin{lemma}\label{lem25}
Let $I$ and $J$ be two indecomposable linearized semigroup ideals in $\Bbbk Q$ of type {\rm II} such that $\Gamma_I\cup \Gamma_J$ is the full circle. Then
$I\cdot J$ decomposes into a direct sum of two non-zero components if and only if the following condition
is satisfied: Each isolated vertex $\omega$ in $\Gamma_I\cap \Gamma_J$ can be written as
$\omega=\alpha\beta$, for some $\alpha\in I$ and $\beta\in J$.
\end{lemma}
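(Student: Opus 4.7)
The plan is to pin down $\Gamma_{IJ}$ as a subgraph of $\Gamma_I \cap \Gamma_J$ and to count its connected components via Theorem~\ref{thm9}. First, I would verify that under the hypotheses (both of type II, union equal to the full cycle) the intersection $\Gamma_I \cap \Gamma_J$ consists of exactly two connected components, each being either a sub-chain with at least one edge or a single isolated vertex: two proper sub-arcs of the cycle $\Gamma$ whose union is all of $\Gamma$ must overlap at two disjoint end-regions. Combined with Proposition~\ref{prop11}\eqref{prop11.1} and the fact from Corollary~\ref{cor12} that $IJ$ has at most two indecomposable summands, this reduces the problem to determining exactly which of these two components survive in $\Gamma_{IJ}$.

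Next, I would handle the components that carry an edge. By Proposition~\ref{prop11}\eqref{prop11.2}, $E_{IJ} = E_I \cap E_J$, so every edge of $\Gamma_I \cap \Gamma_J$ is already present in $\Gamma_{IJ}$. If a vertex $v$ of $\Gamma_I \cap \Gamma_J$ is adjacent to an edge $x \in E_I \cap E_J$, then $v \in I$ and $\varepsilon_x \in J$, and one of the identities $v = v\varepsilon_x$ or $v = \varepsilon_x v$ (depending on whether $x$ is the tail or the head of $v$) shows $v \in IJ$ and hence $v \in V_{IJ}$. Thus any component of $\Gamma_I \cap \Gamma_J$ which contains at least one edge is contained entirely in $\Gamma_{IJ}$.

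Finally, I would handle the isolated vertex components. An isolated vertex $\omega$ of $\Gamma_I \cap \Gamma_J$ belongs to $V_{IJ}$ precisely when $\omega \in IJ$; expanding any algebraic factorization of the basis element $\omega$ into a linear combination of products of paths and using linear independence of paths in $\Bbbk Q$ forces $\omega = \alpha\beta$ for some $\alpha \in I \cap \pP^*$ and $\beta \in J \cap \pP^*$, which is exactly the condition stated in the lemma. Putting everything together, $\Gamma_{IJ}$ consists of two connected components (equivalently, by Theorem~\ref{thm9}, $IJ$ is a direct sum of two non-zero summands) if and only if every isolated vertex of $\Gamma_I \cap \Gamma_J$ admits such a factorization. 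The main obstacle I anticipate is the initial geometric claim that $\Gamma_I \cap \Gamma_J$ has exactly two connected components — this requires a careful analysis of how two proper sub-arcs of a cycle must overlap when their union is the whole cycle, together with the observation that a type II chain cannot be contained in another type II chain whose union with it equals the full cycle.
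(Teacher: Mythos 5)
Your proposal is correct and follows essentially the same route as the paper: the paper's proof is a one-line assertion that the stated condition is, by definition, equivalent to $\Gamma_{IJ}=\Gamma_I\cap\Gamma_J$, and your argument simply supplies the details behind that equivalence (two components of $\Gamma_I\cap\Gamma_J$, edges and their endpoints automatically surviving via Proposition~\ref{prop11}, isolated vertices surviving precisely when they factor) together with the appeal to Theorem~\ref{thm9}. No substantive difference in method, only in the level of detail.
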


\begin{proof}
The condition of the lemma is, by definition, equivalent to the fact that 
$\Gamma_{IJ}=\Gamma_I\cap \Gamma_J$, which implies the statement.
\end{proof}

One nice sufficient condition for decomposition is the following:

\begin{corollary}\label{cor26}
Let $I$ and $J$ be two indecomposable linearized semigroup ideals in $\Bbbk Q$ of type {\rm II} such that $\Gamma_I\cup \Gamma_J$ is the full circle. 
Assume that, for each isolated vertex $\omega$ in $\Gamma_I\cap \Gamma_J$, the edge $x$ in 
$\Gamma_J$ adjacent to $\omega$ is a source.  Then
$I\cdot J$ decomposes into a direct sum of two non-zero components.
\end{corollary}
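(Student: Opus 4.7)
The plan is to invoke Lemma~\ref{lem25}, which reduces the decomposability of $I \cdot J$ into two non-zero summands to showing that every isolated vertex $\omega$ of $\Gamma_I \cap \Gamma_J$ admits a factorization $\omega = \alpha\beta$ with $\alpha \in I$ and $\beta \in J$. So the entire task will be to produce such a factorization from the added assumption on the source edges.

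To do this, I will fix an isolated vertex $\omega$ of $\Gamma_I \cap \Gamma_J$ and first unpack the hypothesis at $\omega$. Since $\omega$ is a maximal path, its two endpoints are $\mathbf{t}(\omega)$, which is a source of $Q$, and $\mathbf{h}(\omega)$, which is a sink of $Q$; these are also the only two edges of the ambient cycle graph incident to $\omega$. The hypothesis asserts that the edge $x$ of $\Gamma_J$ incident to $\omega$ is a source, which forces $x = \mathbf{t}(\omega)$. From $x \in E_J$ and the definition of $\Gamma_J$ I get $\varepsilon_x \in J$, while from $\omega \in V_I$ I get $\omega \in I$.

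At that point the factorization is immediate: taking $\alpha = \omega \in I$ and $\beta = \varepsilon_x \in J$, the concatenation $\alpha\beta = \omega \varepsilon_x$ is defined because $\mathbf{h}(\varepsilon_x) = x = \mathbf{t}(\omega)$, and equals $\omega$ because $\varepsilon_x$ is the local identity at $x$. Lemma~\ref{lem25} then yields that $I \cdot J$ splits into two non-zero summands. There is essentially no hard step here; the only subtlety worth stating carefully is that the singular article in ``the edge $x$ in $\Gamma_J$ adjacent to $\omega$'' is legitimate. This follows from combining $\omega$ being isolated in $\Gamma_I \cap \Gamma_J$ with $\Gamma_I \cup \Gamma_J$ being the full cycle: the two endpoints of $\omega$ must then lie in $E_I \cup E_J$ but not in $E_I \cap E_J$, so each endpoint lies in exactly one of $E_I$, $E_J$, pinning down $x$ uniquely as $\mathbf{t}(\omega)$.
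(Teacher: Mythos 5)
Your proof is correct and follows essentially the same route as the paper: both reduce to Lemma~\ref{lem25} by exhibiting the factorization $\omega=\omega\cdot\varepsilon_x$ with $\omega\in I$ and $\varepsilon_x\in J$, using that the source edge $x$ must be $\mathbf{t}(\omega)$ so that the concatenation is nonzero. Your extra remark justifying the uniqueness of the edge $x$ of $\Gamma_J$ adjacent to $\omega$ is a sensible addition the paper leaves implicit.
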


\begin{proof}
If the condition of the lemma is satisfied, then $\varepsilon_x\in J$ and $\omega\in I$,
moreover, $\omega\varepsilon_x\neq 0$. Therefore the necessary claim follows from
Lemma~\ref{lem25}.
\end{proof}

Compared to the situation studied in \cite{GM2}, we see that, in our case, the multisemigroup 
$\mathcal{S}$ is no longer a semigroup. In some cases the multiplication is indeed multivalued.
We would like to finish with an explicit combinatorial criterion, in terms of Dyck path
combinatorics, for when the product of two indecomposable linearized semigroup ideals decomposes
into a sum of two non-zero components.

Let $\pi$ be a generalized Dyck path as in Subsection~\ref{s4.1}. We read $\pi$ starting from
the top row and going to the right column. Define the {\em initial slope} $\mathbf{is}(\pi)$
of $\pi$ as the number of the column from which $\pi$ starts. Define the {\em terminal slope}
$\mathbf{ts}(\pi)$ of $\pi$ as the number of the row in which $\pi$ terminates. For example,
for the generalized Dyck path $\pi$ in Figure~\ref{fig0}, we have $\mathbf{is}(\pi)=3$
and $\mathbf{ts}(\pi)=4$. Now we are ready to formulate our combinatorial characterization
of decomposability.

\begin{theorem}\label{thm27}
Let $I$ and $J$ be two indecomposable linearized semigroup ideals in $\Bbbk Q$ of type {\rm II} such that $\Gamma_I\cup \Gamma_J$ is the full circle. 
Then $I\cdot J$ decomposes into a direct sum of two non-zero components if and only if the following 
condition is satisfied: For each isolated vertex $\omega$ in $\Gamma_I\cap \Gamma_J$
such that the edge $x$ in $\Gamma_J$ adjacent to $\omega$ is a sink,
we have 
\begin{equation}\label{eq5}
\mathbf{ts}(\pi(J)_{\omega})\geq\mathbf{is}(\pi(I)_{\omega}), 
\end{equation}
where $\pi(I)_{\omega}$ denotes the intersection of $\pi(I)$ with the triangle corresponding to $\omega$
and similarly for $\pi(J)_{\omega}$ (our normalization is such that $\mathbf{is}(\pi(J)_{\omega})=1$
and $\mathbf{ts}(\pi(I)_{\omega})=\mathfrak{l}(\omega)+1$).
\end{theorem}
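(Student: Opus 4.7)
The plan is to combine Lemma~\ref{lem25}, which says that $IJ$ decomposes into two nonzero components precisely when $\omega\in IJ$ for every isolated vertex $\omega$ of $\Gamma_I\cap\Gamma_J$, with a direct combinatorial analysis of this membership condition. First I would observe that such an isolated $\omega$ is forced to be an endpoint of both chains $\Gamma_I$ and $\Gamma_J$, with the unique $\Gamma_I$-edge and the unique $\Gamma_J$-edge at $\omega$ being distinct elements of $Q_0$; otherwise the shared edge would sit in $E_I\cap E_J$ and $\omega$ would not be isolated. Consequently exactly one of $\mathbf{h}(\omega),\mathbf{t}(\omega)$ is the edge $x$ of $\Gamma_J$ adjacent to $\omega$, and the other is the $\Gamma_I$-edge at $\omega$. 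If $x=\mathbf{t}(\omega)$ is a source, then $\varepsilon_x\in J$ and $\omega\in I$, so already $\omega=\omega\cdot\varepsilon_x\in IJ$ (this is the Corollary~\ref{cor26} argument, applied pointwise). Those vertices therefore impose no condition, and the entire content of the theorem lies in the sink subcase $x=\mathbf{h}(\omega)$, where $\varepsilon_{\mathbf{h}(\omega)}\in J\setminus I$ and $\varepsilon_{\mathbf{t}(\omega)}\in I\setminus J$, so that both trivial factorizations of $\omega$ fail.

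Writing $\omega=a_\ell\cdots a_1$ as the composable string of arrows from $\mathbf{t}(\omega)$ to $\mathbf{h}(\omega)$, every potentially useful splitting $\omega=\alpha\beta$ in the sink subcase must take the form $\alpha=a_\ell\cdots a_{i+1}\in I$ and $\beta=a_i\cdots a_1\in J$ for some $1\le i\le\mathfrak{l}(\omega)-1$. To read off when such an $i$ exists, I would pass to the stair-shaped realization of $\Bbbk Q$ supplied by Proposition~\ref{prop16} and focus on the triangle $\Delta_\omega$ associated with $\omega$. In coordinates where the first row of $\Delta_\omega$ is indexed by $\mathbf{h}(\omega)$ and the last column by $\mathbf{t}(\omega)$, the matrix unit representing the suffix $\alpha$ sits in the first row at column $\mathfrak{l}(\omega)+1-i$, while the matrix unit representing the prefix $\beta$ sits in the last column at row $\mathfrak{l}(\omega)+1-i$. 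Since $\pi(I)_\omega$ is the south-west boundary of $I\cap\Delta_\omega$, the membership $\alpha\in I$ is equivalent to $\mathfrak{l}(\omega)+1-i\ge\mathbf{is}(\pi(I)_\omega)$, and similarly $\beta\in J$ is equivalent to $\mathfrak{l}(\omega)+1-i\le\mathbf{ts}(\pi(J)_\omega)$.

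The normalizations $\mathbf{is}(\pi(J)_\omega)=1$ and $\mathbf{ts}(\pi(I)_\omega)=\mathfrak{l}(\omega)+1$ stated in the theorem are exactly the reformulations of $\varepsilon_{\mathbf{h}(\omega)}\in J$ and $\varepsilon_{\mathbf{t}(\omega)}\in I$, and they guarantee that the integer range $[\,\mathfrak{l}(\omega)+1-\mathbf{ts}(\pi(J)_\omega),\ \mathfrak{l}(\omega)+1-\mathbf{is}(\pi(I)_\omega)\,]$ of admissible splitting indices already lies inside $[1,\mathfrak{l}(\omega)-1]$. Nonemptiness of this interval therefore reduces to the single inequality $\mathbf{ts}(\pi(J)_\omega)\ge\mathbf{is}(\pi(I)_\omega)$; combined with Lemma~\ref{lem25} this yields the theorem. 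The main technical obstacle I anticipate is maintaining the dictionary between splittings of $\omega$ and matrix positions in $\Delta_\omega$ uniformly across the two possible orientations of $\Delta_\omega$: north-east and south-west pointing triangles attach $\mathbf{h}(\omega)$ and $\mathbf{t}(\omega)$ to rows and columns in opposite ways, so one must check that the definition of initial and terminal slopes behaves compatibly under this swap. The slope inequality in the statement is symmetric in the right way, so once the conventions are fixed (for example by reflecting south-west pointing triangles before reading off $\mathbf{is}$ and $\mathbf{ts}$) the criterion is preserved.
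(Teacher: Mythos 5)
Your proposal is correct and follows essentially the same route as the paper: reduce via Lemma~\ref{lem25} (and the Corollary~\ref{cor26} observation for the source case) to deciding whether each isolated vertex $\omega$ lies in $IJ$, then pass to the triangle $T_m$ of the stair-shaped realization and observe that $\omega=e_{1,m}$ is recovered exactly when some $e_{1,b}\in I$ and $e_{b,m}\in J$ share an index $b$, which is the slope inequality \eqref{eq5}. The paper phrases this directly in terms of the extreme matrix units $e_{1,\mathbf{is}(\pi(I)_{\omega})}$ and $e_{\mathbf{ts}(\pi(J)_{\omega}),m}$ rather than via splittings $\omega=\alpha\beta$, but the computation is the same.
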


\begin{proof}
We have to take a look at the algebra $T_m$, where $m=\mathfrak{l}(\omega)+1$, which 
corresponds to $\omega$ in out stair-shaped realization of $\Bbbk Q$, namely, we have to
investigate when $(T_m\cap I)(T_m\cap J)$ is non-zero. Our normalization
is chosen such that $\mathbf{ts}(\pi(J)_{\omega})< \mathfrak{l}(\omega)+1$ and
$\mathbf{is}(\pi(I)_{\omega})>1$. The ideal $I$ contains the matrix unit $e_{1,\mathbf{is}(\pi(I)_{\omega})}\in T_m$.
The ideal $J$ contains the matrix unit $e_{\mathbf{ts}(\pi(J)_{\omega}),m}\in T_m$.

If the inequality \eqref{eq5} is satisfied, then the ideal $J$ also contains 
$e_{\mathbf{is}(\pi(I)_{\omega}),m}\in T_m$ because $J$ is an ideal and because of \eqref{eq5}.
Then we have 
$e_{1,\mathbf{is}(\pi(I)_{\omega})}e_{\mathbf{is}(\pi(I)_{\omega}),m}\neq 0$ which implies that the
$\omega$-component of $IJ$ is non-zero.

If the inequality \eqref{eq5} is not satisfied, then, for any matrix unit $e_{a,b}\in I\cap T_m$ and 
any matrix unit $e_{c,d}\in J\cap T_m$, we have $b>c$ and hence $e_{a,b}e_{c,d}=0$. This implies that 
the $\omega$-component of $IJ$ is zero. The claim of the theorem follows.
\end{proof}

Department of Mathematics, Uppsala University, Box. 480,
SE-75106, Uppsala, SWEDEN, email: {\tt love.forsberg\symbol{64}math.uu.se}

\end{document}